\tikzset{snake it/.style={decorate, decoration=snake}}
\tikzset{snake it/.style={decorate, decoration=snake}}
\newtheorem{theorem}{Theorem}[section]
\newtheorem{lemma}[theorem]{Lemma}
\newtheorem{proposition}{Proposition}[section]
\theoremstyle{definition}
\newtheorem{definition}[theorem]{Definition}
\newtheorem{corollary}[theorem]{Corollary}
\newtheorem{conjecture}[theorem]{Conjecture}
\newtheorem{problem}[theorem]{Open Problem}
\theoremstyle{remark}
\newtheorem{remark}[theorem]{Remark}
\numberwithin{equation}{section}
\let\@wraptoccontribs\wraptoccontribs
\begin{document}

\title{Permutation-Like Matrices}

\author{Steven R. Lippold}
\address{Department of Mathematics, Taylor University, Upland, IN 46989}
\email{steve\_lippold@taylor.edu}

\subjclass[2020]{Primary  15B51, Secondary 15A18, 52B11   }

\keywords{binary matrices, permutation matrices, Birkhoff-Von Neumann's Theorem, eigenvalues, matrix multiplication}

\begin{abstract} Permutation Matrices are a well known class of matrices which encode the elements of the symmetric group on $d$ elements as a square $d\times d$ matrix. Motivated by \cite{S2Up}, we define a similar class of matrices which are a generalization of Permutation Matrices. We give explicit formulas for the multiplication of these matrices. Lastly, we discuss the spectral radius, eigenvalues, and periodicity before giving a form of Birkhoff-Von Neumann's Theorem for Left Stochastic Matrices.
\end{abstract}

\maketitle


%

\section{Introduction}

One well known class of matrices is that of the Permutation Matrices (see for example \cite{Linear}). The Permutation Matrices are matrices with one 1 in each row and column and zeroes elsewhere. These matrices are useful for a variety of reasons. For one, they encode row and column permutation as left and right multiplication, respectively. In addition, there exists a group isomorphism between $d\times d$ Permutation Matrices under matrix multiplication and $S_d$, the symmetric group on $d$ letters. Lastly, there are a variety of algorithms that utilize Permutation Matrices and every Doubly Stochastic Matrix can be written as a convex combination of Permutation Matrices.

\cite{S2Up} discussed an identification of edge $d$-partitions of the complete graph on $2d$ verices $K_{2d}$ with $d\times d(2d-1)$ matrices. In particular, there exists a matrix multiplication on $d\times d(2d-1)$ matrices that involves splitting up each of the $d\times d(2d-1)$ matrices as an ordered collection of $2d-1$ square $d\times d$ matrices, which can then be multiplied and reconstructed as a new $d\times d(2d-1)$ matrix. In the case of edge $d$-partitions of $K_{2d}$, these smaller $d\times d$ matrices were similar to Permutation Matrices in that there was precisely one 1 in each column, with the rest of the entries zero. However, unlike Permutation Matrices, there could be more than one 1 in a given row. As such, in this paper we look to examine such matrices closer, starting with characterizing the matrix multiplication.

We would like to briefly explain the structure of the paper. In Section 2, we introduce Permutation-Like Matrices, which form a monoid. Then, we recall the symmetric group actions of row and column permutations which is useful for this paper.

In Section 3, we classify multiplication for Permutation-Like Matrices. First, we consider the case of $2\times 2$ Permutation-Like Matrices, of which there are 3 non-identity matrices. Then, we consider the case of $3\times 3$ Permutation-Like Matrices. In doing this, we give definitions for certain classes of these matrices, simplifying the possible cases. Lastly, we consider the case of $d\times d$ Permutation-Like Matrices, with our approach modelled on the case of $3\times 3$ Permutation-Like Matrices.

In Section 4, we consider several properties of Permutation-Like Matrices. First, we show that in the case $d=2$ and $d=3$, we have that all Permutation-Like Matrices are periodic or satisfy $A^2=R_m^{(d)}$, where $R_m^{(d)}$ is a special class of Permutation-Like Matrices. Then, we give an explicit description of the eigenvalues in the case $d=2$ and $d=3$ before we compute the spectral radius of all $d\times d$ Permutation-Like Matrices for $d\geq2$. Lastly, we show that every Left Stochastic Matrix can be written as a convex combination of Permutation-Like Matrices, which gives a form of the classical Birkhoff-Von Neumann's Theorem for Left Stochastic Matrices. In Section 5, we have some remarks and open questions regarding the material covered in this paper.

\section{Permutation-Like Matrices}
Throughout this paper, let $d\geq1$ and $k$ to be a field. We assume that all matrices have coefficients in $k$. For a given $d\times d$ matrix $A$, let $a_{i,j}$ denote the entry in the $i^{th}$ row and $j^{th}$ column of $A$. 

\subsection{Permutation-Like Matrices}
We will start by define a type of matrix, which previously appeared in \cite{S2Up}.
\begin{definition}
    Let $A$ be a $d\times d$ matrix such that $a_{i,j}=0$ or $1$ for all $1\leq i,j\leq d$ and, for each $1\leq j\leq d$, there exists a unique $1\leq i\leq d$ such that $a_{i,j}=1$.
    
    Then, we call $A$ a Permutation-Like Matrix (PLM). We take $PL_d$ to be the collection of $d\times d$ PLM.
\end{definition}
\begin{remark}
    Notice that the leg submatrices of the $d\times d(2d-1)$ matrix associated with an edge $d$-partition of $K_{2d}$ from \cite{S2Up} are all PLM.
\end{remark}
\begin{remark}
    PLM are called Permutation-Like Matrices because they are share some of the properties of Permutation Matrices but not all of them (every Permutation Matrix is a PLM). One example of a property that holds for Permutation Matrices but not PLM is the sum of each row. Indeed, the sum of the entries in each row of a Permutation Matrix is 1, which is not true for every PLM. For example, the matrix $\begin{pmatrix}
        1&1\\ 0&0
    \end{pmatrix}$ is a PLM, but not a Permutation Matrix. In this sense, PLM are a generalization of Permutation Matrices.
\end{remark}
\begin{remark}
    Notice that every PLM is a Left Stochastic Matrix. This will be further explored in a later section.
\end{remark}
\begin{remark}
    The Stochastic Group of \cite{StochGrp} differs from $PL_d$ since the elements of $PL_d$ are not necessarily invertible. It will become evident in a later section that the only PLM that are in the Stochastic Group are the Permutation Matrices.
\end{remark}
In further studying $PL_d$, it would be helpful to have some notion of the structure of $PL_d$, which is given by the following proposition.
\begin{proposition}\label{PLAlg}
    $PL_d$ is a closed under matrix multiplication.
\end{proposition}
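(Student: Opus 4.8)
The plan is to exploit the fact that the defining property of a PLM is entirely a statement about its columns: a matrix $A$ lies in $PL_d$ precisely when every column of $A$ is a standard basis vector $e_i$ (the vector with a $1$ in row $i$ and $0$ elsewhere). Concretely, I would encode each $A \in PL_d$ by the function $\sigma_A \colon \{1,\dots,d\} \to \{1,\dots,d\}$ that sends each column index $j$ to the unique row index $i$ with $a_{i,j}=1$; the definition of PLM guarantees that $\sigma_A$ is well defined. Thus the $j$-th column of $A$ is $e_{\sigma_A(j)}$. The key point to keep in view is that $\sigma_A$ need not be a bijection, so this encoding sees the full transformation monoid on $d$ letters rather than only the symmetric group.

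Given $A, B \in PL_d$, I would compute the product columnwise. Writing $C = AB$, the $j$-th column of $C$ is $A$ times the $j$-th column of $B$, that is $A e_{\sigma_B(j)}$. But $A e_{\sigma_B(j)}$ is simply the $\sigma_B(j)$-th column of $A$, which by the previous paragraph equals $e_{\sigma_A(\sigma_B(j))}$. Hence every column of $C$ is again a standard basis vector, with the single $1$ sitting in row $\sigma_A(\sigma_B(j))$ of column $j$. All entries of $C$ are therefore $0$ or $1$, and each column contains exactly one $1$, so $C \in PL_d$.

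Alternatively, one can argue entrywise without the column encoding: writing $\sigma = \sigma_B$, one has $c_{i,j} = \sum_{\ell} a_{i,\ell} b_{\ell,j} = a_{i,\sigma(j)}$, because the column sum for $B$ collapses to the single nonzero term $b_{\sigma(j),j}=1$; this again shows column $j$ of $C$ coincides with column $\sigma(j)$ of $A$ and hence has a unique $1$. I expect no serious obstacle here — the statement is essentially immediate once one reads the PLM condition as a condition on columns. The only point that warrants care is precisely the observation that $\sigma_A$ and $\sigma_B$ are arbitrary self-maps of $\{1,\dots,d\}$, not permutations; since the composite $\sigma_A \circ \sigma_B$ is still a self-map, closure follows without invoking injectivity or surjectivity anywhere. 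This computation also makes transparent the monoid structure promised in the introduction: the assignment $A \mapsto \sigma_A$ identifies $PL_d$ with the transformation monoid on $d$ letters, with $\sigma_{AB} = \sigma_A \circ \sigma_B$.
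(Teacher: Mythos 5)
Your argument is correct and is essentially the paper's own proof: both collapse the sum $c_{i,j}=\sum_{p}a_{i,p}b_{p,j}$ to $a_{i,\sigma_B(j)}$ using the unique $1$ in column $j$ of $B$, and then invoke the unique $1$ in column $\sigma_B(j)$ of $A$ to conclude each column of $C$ has exactly one $1$. Your closing observation that $A\mapsto\sigma_A$ identifies $PL_d$ with the full transformation monoid on $d$ letters, with $\sigma_{AB}=\sigma_A\circ\sigma_B$, is a worthwhile bonus beyond the paper's proof --- it essentially resolves the paper's final open problem about realizing $PL_d$ as a monoid generalizing the symmetric group.
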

\begin{proof}
    Let $A,B\in PL_d$ for $d\geq1$ and let $C=A\cdot B$. Then,
    \[c_{i,j}=\sum_{p=1}^da_{i,p}b_{p,j}.\]
    However, since $B$ is a PLM, there exists a unique $p_j$ such that $b_{p_j,j}=1$ and $b_{i,j}=0$ for all $i\neq p_j$. Thus, $c_{i,j}=a_{i,p_j}$. 

    Now, since $A$ is a PLM, we further know that given $1\leq j\leq d$, there exists a unique $1\leq i\leq d$ such that $a_{i,j}=1$. In particular, for each $1\leq j\leq d$, there exists a unique $1\leq i\leq d$ such that $a_{i,p_j}=1$. Thus, for each $1\leq j\leq d$, there exists a unique $1\leq i\leq d$ such that $c_{i,j}=a_{i,p_j}=1$. Hence, $C$ is a PLM.
\end{proof}
\begin{remark}
    There are non-invertible PLM, so $PL_d$ is not a subgroup of the General Linear Group. We will discuss more on the invertibility of PLM in a later section. However, it is important to note that there is an identity PLM (the identity matrix) and so $PL_d$ is a monoid.
\end{remark}
\subsection{Symmetric Group Action on Permutation-Like Matrices}

Before we focus on $PL_d$, there are two (left) group actions we would like to focus on, which will simplify some later proofs.

Let $S_d$ be the symmetric group on $d$ letters. There are two group actions on $m\times n$ matrices, one by $S_m$ and one by $S_n$. 

Let $A$ be a $m\times n$ matrix. For $\sigma\in S_m$, we define $\sigma\ast A$ to be the matrix given by permuting the rows of $A$ with $\sigma$. Second, for $\tau\in S_n$, we define $\tau\star A$ to be the matrix given by permuting the columns of $A$ by $\tau^{-1}$. This definition leads to the following lemma.
\begin{lemma}\label{SymGroupLemma}
    Let $A$ be a $m\times n$ matrix, $B$ be a $n\times p$ matrix, $\sigma\in S_m$ and $\tau\in S_p$. Then,
    \begin{enumerate}
        \item $\sigma\ast(A\cdot B)=(\sigma\ast A)\cdot B$.
        \item $\tau\star(A\cdot B)=A\cdot (\tau\star B)$.
    \end{enumerate}
\end{lemma}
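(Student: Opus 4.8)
The plan is to prove both statements directly from the definitions of the two actions, since each is really a statement about how permuting rows (resp.\ columns) interacts with the matrix product. For part (1), I would recall that for $\sigma\in S_m$, the action $\sigma\ast A$ permutes the rows of $A$, so that the $i^{th}$ row of $\sigma\ast A$ is the $(\sigma^{-1}(i))^{th}$ row of $A$; concretely, $(\sigma\ast A)_{i,p}=a_{\sigma^{-1}(i),p}$. The key observation is that left multiplication by $B$ acts independently on each row of the left factor: the $i^{th}$ row of $A\cdot B$ depends only on the $i^{th}$ row of $A$. Hence permuting the rows of $A$ and then multiplying by $B$ should produce exactly the same rearrangement of rows as multiplying first and then permuting.

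\medskip
For part (1), I would compute both sides entrywise. On one hand,
\[
\bigl(\sigma\ast(A\cdot B)\bigr)_{i,j}=(A\cdot B)_{\sigma^{-1}(i),j}=\sum_{p=1}^{n}a_{\sigma^{-1}(i),p}\,b_{p,j}.
\]
On the other hand,
\[
\bigl((\sigma\ast A)\cdot B\bigr)_{i,j}=\sum_{p=1}^{n}(\sigma\ast A)_{i,p}\,b_{p,j}=\sum_{p=1}^{n}a_{\sigma^{-1}(i),p}\,b_{p,j},
\]
and these two expressions agree for all $i,j$, giving part (1). For part (2) I would proceed analogously, using that for $\tau\in S_p$ the action $\tau\star B$ permutes columns by $\tau^{-1}$, so $(\tau\star B)_{p,j}=b_{p,\tau(j)}$, and that right multiplication by $A$ acts independently on the columns of the right factor. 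The entrywise computation gives
\[
\bigl(\tau\star(A\cdot B)\bigr)_{i,j}=(A\cdot B)_{i,\tau(j)}=\sum_{p=1}^{n}a_{i,p}\,b_{p,\tau(j)}=\sum_{p=1}^{n}a_{i,p}\,(\tau\star B)_{p,j}=\bigl(A\cdot(\tau\star B)\bigr)_{i,j}.
\]

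\medskip
The main thing to be careful about is keeping the inverses straight: because the column action is defined to permute columns by $\tau^{-1}$ rather than $\tau$, the index bookkeeping in part (2) must be done consistently so that the convention $(\tau\star B)_{p,j}=b_{p,\tau(j)}$ is exactly what the stated definition yields. This is the only real subtlety; otherwise both parts are immediate from the associativity and bilinearity of matrix multiplication together with the fact that row operations touch only the left factor and column operations only the right factor. I expect no genuine obstacle beyond verifying that the index substitutions under $\sigma^{-1}$ and $\tau$ are recorded correctly, which the entrywise computations above make transparent.
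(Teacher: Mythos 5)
Your proof is correct, but it takes a more explicit route than the paper, which gives essentially no computation at all: it simply observes that under the identification of $S_d$ with permutation matrices, $\sigma\ast A = P_\sigma A$ and $\tau\star B = B P_{\tau^{-1}}$ for suitable permutation matrices, so both identities reduce to the associativity of matrix multiplication, $(P_\sigma A)B = P_\sigma(AB)$ and $(AB)P_{\tau^{-1}} = A(BP_{\tau^{-1}})$. Your entrywise index computation proves the same thing from scratch, and you are right to flag the one genuine subtlety, namely that the column action is defined via $\tau^{-1}$, which is exactly what makes $(\tau\star B)_{p,j}=b_{p,\tau(j)}$ the correct convention and makes $\star$ a left action. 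What your approach buys is self-containedness and an explicit check that the index bookkeeping under $\sigma^{-1}$ and $\tau$ is consistent; what the paper's approach buys is brevity and the conceptual point that the two actions are nothing but left and right multiplication by permutation matrices, which is also the perspective the paper uses implicitly throughout the later sections. Either argument is acceptable, and yours would serve as the expanded version of the proof the paper chose to omit.
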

Notice that Lemma \ref{SymGroupLemma} immediately follows from basic linear algebra (see \cite{Linear} for example) with the identification of $S_d$ with Permutation Matrices. In particular, row permutation is given by left multiplication of some Permutation Matrix and column permutation is given by right multiplication of some Permutation Matrix. These group actions will be important moving forward and Lemma \ref{SymGroupLemma} will be implicitly used.
\section{Multiplication of Permutation-Like Matrices}
In this section, we give explicit computations for multiplication of PLMs. We will cover these computations using the cases of $d=2$, $d=3$, and $d>3$.
\subsection{The Case $d=2$}
Suppose that $d=2$ and consider $PL_2$. We would like to give multiplication of all of these matrices. Besides the identity matrix $I$, there are three other PLM.
\begin{enumerate}
    \item $R_1^{(2)}=\displaystyle\begin{pmatrix}
        1&1\\
        0&0
    \end{pmatrix}$
    \item $R_2^{(2)}=\displaystyle\begin{pmatrix}
        0&0\\
        1&1
    \end{pmatrix}$
    \item $P_2=\displaystyle\begin{pmatrix}
        0&1\\
        1&0
    \end{pmatrix}$.
\end{enumerate}
\begin{remark}
    We will examine the multiplication closer, but $P_2$ is the matrix whose left multiplication is identified with matrix transposition.
\end{remark}
First, we present the following proposition.
\begin{proposition}\label{RowsDim2}
    Let $m=1$ or $m=2$ and $A\in PL_2$. Then, $R_m^{(2)}\cdot A=R_m^{(2)}$.
\end{proposition}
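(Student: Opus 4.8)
The plan is to compute the product $R_m^{(2)}\cdot A$ directly at the level of entries, exploiting the special shape of $R_m^{(2)}$ together with the defining column condition on PLM. Writing $C=R_m^{(2)}\cdot A$, the entry formula used in the proof of Proposition \ref{PLAlg} gives $c_{i,j}=\sum_{p=1}^2 (R_m^{(2)})_{i,p}\,a_{p,j}$. The key structural point is that $R_m^{(2)}$ has its $m$-th row equal to $(1,1)$ and its other row equal to $(0,0)$, so this sum collapses in two different ways depending on whether $i=m$.

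First I would treat the row $i=m$: here every entry $(R_m^{(2)})_{m,p}$ equals $1$, so $c_{m,j}=\sum_{p=1}^2 a_{p,j}$ is exactly the $j$-th column sum of $A$. Since $A\in PL_2$, each column of $A$ contains a unique $1$ and is otherwise zero, so this column sum equals $1$ for every $j$, recovering the row $(1,1)$ of $R_m^{(2)}$. Next I would treat the remaining row $i\neq m$: there every entry $(R_m^{(2)})_{i,p}$ equals $0$, so $c_{i,j}=0$ for all $j$, matching the zero row of $R_m^{(2)}$. Combining the two cases gives $C=R_m^{(2)}$, as claimed.

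I do not expect a genuine obstacle here: the statement reduces to the single observation that every column of a PLM sums to $1$ (equivalently, that PLM are Left Stochastic, as noted in an earlier remark), so multiplying the constant-row matrix $R_m^{(2)}$ against such an $A$ merely reads off these column sums. One could alternatively verify the claim by brute force, since $PL_2$ contains only the four elements $I,R_1^{(2)},R_2^{(2)},P_2$; but the column-sum argument is cleaner and is the form I would expect to generalize to larger $d$.
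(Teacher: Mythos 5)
Your proof is correct and takes essentially the same approach as the paper: a direct entrywise computation showing the rows $i\neq m$ vanish and the row $i=m$ is all ones. The only (harmless) difference is in the $i=m$ row, where you compute the column sum of $A$ directly, whereas the paper deduces $b_{m,j}=1$ by appealing to closure of $PL_2$ under multiplication (the product must have a $1$ in each column, and it cannot lie in a row other than $m$); your version is slightly more self-contained.
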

\begin{proof}
    This proof follows by direct computation, but we give an alternative proof that is illustrative for later sections.
    
    Let $B=R_m^{(2)}\cdot A$. Notice that
    \[b_{i,j}=r_{i,1}b_{1,j}+r_{i,2}b_{2,j}.\]
    However, if $i\neq m$, we know that $r_{i,p}=0$ for $p=1$ and $p=2$. Thus, $b_{i,j}=0$ for all $i\neq m$. Further, we know that there is at least one 1 in each column, so $b_{m,j}=1$ for $j=1$ and $j=2$. Thus,
    \[b_{i,j}=\begin{cases}
        0&\quad i\neq m\\
        1&\quad i=m
    \end{cases}\]
    and hence $B=R_m^{(2)}$.
\end{proof}
Next, we cover the remaining case, which fully classifies multiplication in the case $d=2$.
\begin{proposition}\label{NilpotentDim2}
    Let $d=2$. Then,
    \begin{enumerate}
        \item $P_2\cdot R_1^{(2)}=R_2^{(2)}$.
        \item $P_2\cdot R_2^{(2)}=R_1^{(2)}$.
        \item $P_2\cdot P_2 = I$.
    \end{enumerate}
\end{proposition}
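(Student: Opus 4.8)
The plan is to verify each of the three identities directly, since all three are products of explicit $2 \times 2$ matrices and the computations are immediate. For instance, for the first identity I would simply form the product
\[
P_2 \cdot R_1^{(2)} = \begin{pmatrix} 0 & 1 \\ 1 & 0 \end{pmatrix}\begin{pmatrix} 1 & 1 \\ 0 & 0 \end{pmatrix}
\]
and read off the resulting entries, with the remaining two identities handled in the same fashion. This brute-force route has no genuine obstacle; the only thing to watch is the order of multiplication, since $P_2$ appears on the left in each product.

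A more illustrative approach, which anticipates the machinery developed later in the paper, is to recognize $P_2$ as the permutation matrix corresponding to the transposition $(1\,2) \in S_2$. Left multiplication by $P_2$ is then exactly the row-permutation action $\sigma \ast (-)$ of Lemma \ref{SymGroupLemma} with $\sigma = (1\,2)$. Under this interpretation I would argue that $R_1^{(2)}$, whose unique nonzero row sits in position $1$, is carried by the row swap to the matrix whose nonzero row sits in position $2$, namely $R_2^{(2)}$; this establishes identity (1), and identity (2) follows by the symmetric argument with the roles of the two rows interchanged.

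For the third identity either approach works cleanly: directly, $P_2^2$ multiplies out to the identity, or conceptually, $P_2$ is the permutation matrix of $(1\,2)$ and the group isomorphism between $2 \times 2$ permutation matrices and $S_2$ sends $P_2^2$ to $(1\,2)^2 = e$, forcing $P_2^2 = I$. I expect no real difficulty anywhere in the argument; the value of the statement is organizational, since together with Proposition \ref{RowsDim2} it completes the full multiplication table for $PL_2$.
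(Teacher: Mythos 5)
Your proposal is correct and matches the paper, which simply states that the proposition ``follows computationally''; your direct multiplication of the explicit $2\times 2$ matrices is exactly that verification. The alternative reading of $P_2$ as the permutation matrix of $(1\,2)$ acting by row swaps is a valid and slightly more conceptual gloss, but it is not needed and does not change the substance of the argument.
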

Notice that the proof for Proposition \ref{NilpotentDim2} follows computationally.
\subsection{The Case $d=3$}
Next, we will consider the case of $d=3$. First, notice that in $PL_3$ there are 27 matrices, with one of them being the identity matrix. We start with a special class of PLM called the row PLMs.
\begin{definition}
    For $1\leq m\leq 3$, let $R_m^{(3)}$ be the matrix where $r_{i,j}=0$ if $i\neq m$ and $r_{i,j}=1$ if $i=m$ for all $1\leq j\leq d$. We call $R_m^{(3)}$ the Row Permutation-Like Matrices (row PLM).
\end{definition}
We would like to consider $A\cdot B$, where $A,B$ are PLM. We will start with the case where $A$ is a row PLM.
\begin{proposition}\label{RowDim3}
    Take $1\leq m\leq 3$ and $A\in PL_3$. Then, $R_m^{(3)}\cdot A=R_m^{(3)}$.
\end{proposition}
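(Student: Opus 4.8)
The plan is to mirror the alternative argument given for Proposition~\ref{RowsDim2}, since $R_m^{(3)}$ plays exactly the same structural role in dimension three as $R_m^{(2)}$ did in dimension two: every entry of row $m$ equals $1$ and every other entry equals $0$. First I would set $B = R_m^{(3)} \cdot A$ and write out the generic entry via the definition of matrix multiplication,
\[
b_{i,j} = \sum_{p=1}^{3} r_{i,p}\, a_{p,j},
\]
where $r_{i,p}$ denotes the entries of $R_m^{(3)}$.

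Next I would split into the two cases dictated by the shape of $R_m^{(3)}$. If $i \neq m$, then $r_{i,p} = 0$ for every $p$, so the sum vanishes and $b_{i,j} = 0$ for all $j$; this disposes of every row of $B$ except row $m$. If $i = m$, then $r_{m,p} = 1$ for every $p$, so $b_{m,j} = \sum_{p=1}^{3} a_{p,j}$ is exactly the $j$-th column sum of $A$.

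The one place where the hypotheses are actually used is in evaluating this column sum. Since $A \in PL_3$, the defining property of a PLM guarantees that each column $j$ of $A$ contains a unique $1$ with all other entries $0$; hence $\sum_{p=1}^{3} a_{p,j} = 1$ for every $j$, so $b_{m,j} = 1$ for all $j$. Combining the two cases gives $b_{i,j} = 0$ for $i \neq m$ and $b_{m,j} = 1$, which is precisely $R_m^{(3)}$, completing the argument.

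I expect no genuine obstacle here: the computation is routine and essentially identical to the $d=2$ case, the only subtlety being to invoke the uniqueness clause of the PLM definition (rather than merely ``at least one $1$'') so that the column sum is exactly $1$ and not larger. It is worth noting that nothing in this reasoning depends on $d = 3$; the same two-case split carries over verbatim to the general $d \times d$ row PLM, which foreshadows the later treatment of arbitrary $d$.
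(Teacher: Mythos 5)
Your proof is correct and follows essentially the same computation as the paper's: set $B=R_m^{(3)}\cdot A$, expand the product, and split on $i\neq m$ versus $i=m$. The only (minor) difference is in the $i=m$ case, where you evaluate $b_{m,j}=\sum_{p=1}^{3}a_{p,j}=1$ directly from the uniqueness clause of the PLM definition, whereas the paper instead leans on closure under multiplication (Proposition~\ref{PLAlg}) to conclude that $B$ is a PLM whose only possibly nonzero row is row $m$; your version is slightly more self-contained, and, as you note, generalizes verbatim to arbitrary $d$ exactly as in Proposition~\ref{rowPLMGen}.
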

\begin{proof}
    This proof follows similarly to Proposition \ref{RowsDim2}.

     Let $B=R_m^{(3)}\cdot A$. Notice that
    \[b_{i,j}=r_{i,1}b_{1,j}+r_{i,2}b_{2,j}+r_{i,3}b_{3,j}.\]
    However, if $i\neq m$, we know that $r_{i,p}=0$ for $1\leq p\leq 3$. Thus, $b_{i,j}=0$ for all $i\neq m$. Further, we know that there is at least one 1 in each column, so $b_{m,j}=1$ for $1\leq j\leq 3$. Thus,
    \[b_{i,j}=\begin{cases}
        0&\quad i\neq m\\
        1&\quad i=m
    \end{cases}\]
    and hence $B=R_m^{(3)}$.
\end{proof}
Next, we will consider the case of $A\cdot B$ where $B$ is a row PLM.
\begin{proposition}\label{CPLMwRowDim3}
    Let $A$ be a PLM and let $1\leq m\leq 3$. Then,
    \[A\cdot R_m^{(3)}=\begin{pmatrix}
        a_{1,m}&a_{1,m}&a_{i,m}\\
        a_{2,m}&a_{2,m}&a_{2,m}\\
        a_{3,m}&a_{3,m}&a_{3,m}
    \end{pmatrix}\]
\end{proposition}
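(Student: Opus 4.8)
The plan is to prove this by direct computation from the definition of matrix multiplication, exploiting the degenerate structure of $R_m^{(3)}$. Writing $C = A\cdot R_m^{(3)}$, the entry in position $(i,j)$ is
\[
c_{i,j}=\sum_{p=1}^{3}a_{i,p}\,r_{p,j},
\]
where $r_{p,j}$ denotes the entries of $R_m^{(3)}$. The key structural fact I would invoke is the definition of the row PLM: $r_{p,j}=1$ precisely when $p=m$, and $r_{p,j}=0$ otherwise, and crucially this holds for \emph{every} column index $j$. Thus in the sum above, only the $p=m$ term survives, and it survives regardless of which column $j$ we are in.

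Consequently the sum collapses to $c_{i,j}=a_{i,m}\cdot r_{m,j}=a_{i,m}$ for all $1\leq i,j\leq 3$. This says that each of the three columns of $C$ is an identical copy of the $m$-th column of $A$. Writing this out entrywise yields exactly the displayed matrix whose $i$-th row is the constant row $(a_{i,m},\,a_{i,m},\,a_{i,m})$, which is the claimed form. (Note that the top-right entry printed as $a_{i,m}$ in the statement should read $a_{1,m}$; this is a transcription slip, and the computation makes the intended pattern unambiguous.)

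It is worth observing that this is the natural companion to Proposition \ref{RowDim3}, but with the roles of the indices interchanged. There, left multiplication by a row PLM annihilated all rows except row $m$; here, right multiplication by a row PLM broadcasts a single column of $A$ across all columns. I would emphasize that the one point requiring care is keeping track of which index of $R_m^{(3)}$ is the fixed one: it is the \emph{row} index $p=m$ that selects the surviving term, and the independence of $r_{m,j}$ from $j$ is exactly what forces every column of the product to coincide.

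There is no genuine obstacle here — the argument is a one-line collapse of the multiplication sum — so I would present it as a short direct verification rather than invoking any of the symmetric group machinery of Lemma \ref{SymGroupLemma}, reserving that machinery for the less transparent general-$d$ computations that follow.
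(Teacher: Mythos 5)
Your proof is correct and is essentially identical to the paper's: both collapse the sum $\sum_p a_{i,p}r_{p,j}$ to the single surviving term $a_{i,m}r_{m,j}=a_{i,m}$ using the definition of $R_m^{(3)}$. Your observation that the top-right entry $a_{i,m}$ in the statement is a typo for $a_{1,m}$ is also correct.
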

\begin{proof}
    Let $B=A\cdot R_m^{(3)}$. Recall that
    \[r_{i,j}=\begin{cases}
        1, & \ \ i=m\\
        0, & \ \ \textrm{otherwise}
    \end{cases}\]
    Therefore, 
    \[b_{i,j}=a_{i,1}r_{1,j}+a_{i,2}r_{2,j}+a_{i,3}r_{3,j}=a_{i,m}r_{m,j}=a_{i,m}.\]
\end{proof}

We have exhausted the cases of $A\cdot B$, where either $A$ or $B$ is a row PLM. Next, we would like to consider several other cases, which all focus on a different type of PLM.
\begin{definition}
    We say that $A\in PL_3$ is a canonical Permutation-Like Matrix (CPLM) if it is a block matrix of the form
    \[A=\begin{pmatrix}
        \boxed{a_{1,1}}&\boxed{0 \ 0}\\
        \boxed{v}&\boxed{A_1}
    \end{pmatrix}\]
    where $v\in k^2$ and $A_1\in PL_2$. We will call $A_1$ the Permutation-Like Component (PLC) of $A$. If $a_{1,1}=1$, we say that $A$ is a leading CPLM.
\end{definition}
With this definition, we have the following lemma, which will simplify the number of cases.
\begin{lemma}\label{CanonicalLemma}
    Let $A\in PL_3$. Then, there exists $\sigma\in S_3$ and CPLM $B$ such that $\sigma\ast A=B$. In other words, any PLM is a CPLM up to swapping rows.
\end{lemma}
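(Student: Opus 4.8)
The plan is to produce the permutation $\sigma$ explicitly by promoting a suitable row of $A$ to the top. The only features distinguishing a CPLM in $PL_3$ from an arbitrary element of $PL_3$ are that the first row have zeros in columns $2$ and $3$ and that the bottom-right $2\times 2$ block be a PLM. Thus the whole problem reduces to finding one row of $A$ whose second and third entries both vanish, and then moving it into the first position.

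First I would locate the $1$'s in the last two columns. Since $A\in PL_3$, column $2$ contains a single entry equal to $1$, say in row $i_2$, and column $3$ contains a single $1$, say in row $i_3$. A row is eligible to become the new first row exactly when it differs from both $i_2$ and $i_3$. The key step is the counting observation that such a row exists: the indices $i_2$ and $i_3$ mark at most two of the three rows, so at least one row of $A$ has a $0$ in both column $2$ and column $3$. I would select such a row $r$ and take $\sigma\in S_3$ to be any permutation with $\sigma(r)=1$, setting $B=\sigma\ast A$.

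It then remains to verify that $B$ is a CPLM. Since row permutation is realized as left multiplication by a permutation matrix, which is itself a PLM, Proposition \ref{PLAlg} gives $B\in PL_3$. By the choice of $r$ the new first row satisfies $b_{1,2}=b_{1,3}=0$, supplying the zero block in the top-right corner. The unique $1$'s of columns $2$ and $3$ now lie in rows $2$ and $3$, so the bottom-right $2\times 2$ submatrix carries exactly one $1$ in each of its two columns and hence lies in $PL_2$; this is the Permutation-Like Component $A_1$. The entry $b_{1,1}$ together with the first-column entries in rows $2$ and $3$ play the roles of $a_{1,1}$ and $v\in k^2$, on which the definition places no further restriction. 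Therefore $B$ has the required block form.

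I expect no serious obstacle: the entire content sits in the pigeonhole observation that one of three rows avoids two marked rows, and everything after the choice of $r$ is bookkeeping. The only point deserving mild care is treating the degenerate case $i_2=i_3$, in which two rows are eligible, together with the generic case $i_2\neq i_3$ uniformly.
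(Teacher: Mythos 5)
Your proof is correct and follows essentially the same route as the paper: both arguments rest on the pigeonhole observation that the two (not necessarily distinct) rows containing the $1$'s of columns $2$ and $3$ leave at least one row with zeros in those columns, which is then swapped into the first position. The paper phrases this via the submatrix obtained by deleting the first column and dispatches the row PLM case separately first, whereas you handle all cases uniformly, but the substance is identical.
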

\begin{proof}
    First, if $A$ is a row PLM, then $A$ is either a CPLM with a leading element of zero or $(1,2)\ast A$ is a CPLM with a leading element of zero. Indeed, $R_m^{(3)}$ is a CPLM for $m>1$ with PLC given by $R_{m-1}^{(2)}$. Further, in the case of $R_1^{(3)}$, it follows that $(1,2)\ast R_1^{(3)}=R_2^{(3)}$. Next, suppose $A$ is not a row PLM.
    
    If we consider the $d\times d-1$ matrix $\Tilde{A}$ given by $\Tilde{a}_{i,j}=a_{i,j+1}$ for $1\leq i\leq d$ and $1\leq j\leq d-1$, we have that at least one row will be all zeros, as there is at most 1 nonzero entry in each column. Let $r'$ be the row of $\Tilde{A}$ with all zeros and take $\sigma=(1,r')\in S_d$. So, $B=\sigma\ast A$ is the matrix given by $A$ with rows $1$ and $r'$ swapped. Notice that the only column that could have a nonzero element in the first row of $B$ is the first column by the construction of $r'$. Thus, $A$ is a CPLM with PLC given by $\sigma\ast\Tilde{A}$ with the first row removed. 
\end{proof}
With Lemma \ref{CanonicalLemma} in mind, we will be able to reduce our cases down to $A\cdot B$ where $A$ is a CPLM and $B$ is a PLM. First, we will consider the cases where $B$ is also a CPLM. We will start with the case where $B$ is not a leading CPLM.

\begin{proposition}\label{2CPLM1}
    Let $A,B$ be CPLM with PLC $A_1$ and $B_1$, respectively. Suppose that $b_{x,1}=1$ for $x>1$. Then, $A\cdot B$ is given by the block matrix
    \[A\cdot B=\begin{pmatrix}
0&\begin{matrix}0&0\end{matrix}\\
\begin{matrix}a_{2,x}\\a_{3,x}\end{matrix}&A_1\cdot B_1
\end{pmatrix}.\]
\end{proposition}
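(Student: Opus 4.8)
The plan is to exploit the block structure directly, writing each CPLM in the $1+2$ partitioned form supplied by the definition and then carrying out block multiplication. Write
\[A=\begin{pmatrix} a_{1,1} & \mathbf{0}\\ v & A_1\end{pmatrix}, \qquad B=\begin{pmatrix} b_{1,1} & \mathbf{0}\\ w & B_1\end{pmatrix},\]
where $\mathbf{0}$ is the $1\times 2$ zero row guaranteed by the CPLM definition, $v,w\in k^2$ are the bottom-left columns, and $A_1,B_1\in PL_2$ are the PLCs. The whole argument reduces to reading off the four blocks of the product and matching them against the claimed matrix.

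First I would record the two facts that drive everything. Because $A$ and $B$ are CPLM, their first rows are $(a_{1,1},0,0)$ and $(b_{1,1},0,0)$; this is exactly the statement that the top-right block is zero. Next, the hypothesis $b_{x,1}=1$ with $x>1$, together with the uniqueness of the single $1$ in each column from the definition of a PLM, forces $b_{1,1}=0$ and pins down the bottom-left column $w=(b_{2,1},b_{3,1})^{T}$ to be the standard basis vector $e_{x-1}$ of $k^2$.

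Then I would compute the product blockwise. Using the two zero top-right blocks, the multiplication collapses to
\[A\cdot B=\begin{pmatrix} a_{1,1}b_{1,1} & \mathbf{0}\\ b_{1,1}v+A_1 w & A_1\cdot B_1\end{pmatrix}.\]
Substituting $b_{1,1}=0$ kills the top-left entry and the $b_{1,1}v$ summand, leaving bottom-left block $A_1 w=A_1 e_{x-1}$, which is precisely the $(x-1)$-th column of $A_1$, namely $(a_{2,x},a_{3,x})^{T}$. The bottom-right block is $A_1\cdot B_1$ untouched, and the top row is zero, which matches the asserted block matrix.

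The only point requiring care, and the closest thing to an obstacle, is the index bookkeeping between the PLC $A_1$ and the original matrix $A$: one must keep straight that the columns of $A_1$ are indexed by $\{2,3\}$ in $A$, so that $A_1 e_{x-1}$ really is $(a_{2,x},a_{3,x})^{T}$ and not some off-by-one shift. As a cross-check I would also recompute the top-left entry from the full sum $c_{1,1}=\sum_p a_{1,p}b_{p,1}=a_{1,x}$ and confirm $a_{1,x}=0$ for $x>1$ directly from the CPLM form of $A$, ensuring the two evaluations of the $(1,1)$ entry agree. Everything else is routine, so no genuine difficulty is anticipated.
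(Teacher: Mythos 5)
Your proposal is correct and follows essentially the same route as the paper: both arguments use the CPLM block structure of $A$ and $B$, deduce $b_{1,1}=0$ from the uniqueness of the $1$ in the first column together with $b_{x,1}=1$ for $x>1$, and identify the bottom-left block as the $(x-1)$-st column of $A_1$ and the bottom-right block as $A_1\cdot B_1$. The only difference is presentational --- you package the computation as a single $2\times 2$ block multiplication where the paper writes out each entry $c_{i,j}$ as an explicit sum --- and your index bookkeeping between $A$ and $A_1$ is handled correctly.
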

\begin{proof}
    First, let $A\cdot B=C$ and take the $i^{th}$ row and $j^{th}$ column of $A$, $B$ and $C$ to be denoted by $a_{i,j}$, $b_{i,j}$ and $c_{i,j}$, respectively. We will compute $c_{i,j}$ for all $1\leq i,j\leq 3$.

    Since $A$ is a CPLM, we know that $a_{1,2}=0$ and $a_{1,3}=0$. In particular, we know that
    \[c_{1,j}=a_{1,1}b_{1,j}+a_{1,2}b_{2,j}+a_{1,3}b_{3,j}=a_{1,1}b_{1,j}\]
    for all $1\leq j\leq 3$. However, notice that since $B$ is a CPLM so $b_{1,j}=0$ for $j>1$. Further, $b_{x,1}=1$ for some $x>1$, so we know that $b_{1,1}=0$, as there is at most one 1 in each column. Thus, $c_{1,j}=0$ for all $1\leq j\leq 3$.

    Next, consider $c_{i,1}$ for $i=2$ and $i=3$. We know that $a_{1,3}=0$, $b_{x,1}=1$, and $b_{i,1}=0$ for $i\neq x$. Therefore,
    \[c_{i,1}=a_{i,1}b_{1,1}+a_{i,2}b_{2,1}+a_{i,3}b_{3,1}=a_{i,x}b_{x,1}=a_{i,x}.\]
    In particular, $c_{2,1}=a_{2,x}$ and $c_{3,1}=a_{3,x}$.

    Lastly, since $a_{2,1}=a_{3,1}=0$, it follows for $2\leq i,j\leq 3$ that
    \[c_{i,j}=a_{i,1}b_{1,j}+a_{i,2}b_{2,j}+a_{i,3}b_{3,j}=a_{i,2}b_{2,j}+a_{i,3}b_{3,j}.\]
    Notice that $\begin{pmatrix}
        c_{2,2}&c_{2,3}\\
        c_{3,2}&c_{3,3}
    \end{pmatrix}$ is precisely $A_1\cdot B_1$ since
    \[A_1=\begin{pmatrix}
        a_{2,2}&a_{2,3}\\
        a_{3,2}&a_{3,3}
    \end{pmatrix}\]
    and
    \[B_1=\begin{pmatrix}
        b_{2,2}&b_{2,3}\\
        b_{3,2}&b_{3,3}
    \end{pmatrix}.\]
    Therefore, $C$ is of the form $\begin{pmatrix}
            0&0 \ 0\\
            \begin{matrix}
                a_{2,x}\\
                a_{3,x}
            \end{matrix}&A_1\cdot B_1
        \end{pmatrix}.$
\end{proof}
Next, we consider the case where $B$ is a leading PLM.
\begin{proposition}\label{2CPLM2}
    Let $A,B$ be CPLM with PLC $A_1$ and $B_1$, respectively. If $B$ is a leading CPLM, then $C=A\cdot B$ is a CPLM with PLC $A_1\cdot B_1$. Further, $c_{i,1}=a_{i,1}$ for all $1\leq i\leq 3$.
\end{proposition}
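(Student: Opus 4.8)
The plan is to mirror the entry-by-entry computation of Proposition \ref{2CPLM1}; indeed this proposition is precisely the complementary case in which the unique $1$ of the first column of $B$ sits in row $1$ (i.e.\ ``$x=1$''), rather than in a row below. First I would record the consequences of the hypotheses. Since $B$ is a CPLM we have $b_{1,2}=b_{1,3}=0$, and since $B$ is leading we have $b_{1,1}=1$. Because $B$ is a PLM, each column contains exactly one $1$; as the $1$ of the first column already lies at position $(1,1)$, this forces $b_{2,1}=b_{3,1}=0$. Simultaneously, $A$ being a CPLM gives $a_{1,2}=a_{1,3}=0$. These four families of vanishing entries are exactly what makes the block decomposition go through.

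With $C=A\cdot B$ and $c_{i,j}=\sum_{p=1}^{3}a_{i,p}b_{p,j}$, I would then compute the three relevant pieces in order. For the first row, using $a_{1,2}=a_{1,3}=0$ collapses the sum to $c_{1,j}=a_{1,1}b_{1,j}$, and substituting the values of $b_{1,j}$ yields $c_{1,1}=a_{1,1}$ together with $c_{1,2}=c_{1,3}=0$; the latter two zeros are precisely what is needed for $C$ to have the CPLM block shape. For the first column, using $b_{2,1}=b_{3,1}=0$ collapses the sum to $c_{i,1}=a_{i,1}b_{1,1}=a_{i,1}$, which establishes the claimed identity $c_{i,1}=a_{i,1}$ for every $1\leq i\leq 3$.

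Finally, for the lower-right $2\times 2$ block with $2\leq i,j\leq 3$, I would invoke $b_{1,j}=0$ for $j>1$ to discard the $p=1$ term, leaving $c_{i,j}=a_{i,2}b_{2,j}+a_{i,3}b_{3,j}$, which is exactly the $(i-1,j-1)$ entry of $A_1\cdot B_1$ under the identifications of $A_1$ and $B_1$ fixed in Proposition \ref{2CPLM1}. Assembling these three computations shows that $C$ is a CPLM with top-left entry $a_{1,1}$ and PLC $A_1\cdot B_1$, which is the assertion. There is no genuine obstacle beyond careful bookkeeping; the single step that truly uses the PLM axiom, rather than merely the CPLM block shape, is the deduction $b_{2,1}=b_{3,1}=0$ from leadingness, and this is the point I would flag explicitly so as not to leave a gap.
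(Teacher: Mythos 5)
Your proof is correct and follows essentially the same route as the paper's: isolate the vanishing entries forced by the CPLM shapes and the leading hypothesis, then compute the first row, the first column, and the lower-right $2\times 2$ block separately. The only (harmless) difference is that you spell out why $b_{2,1}=b_{3,1}=0$ follows from the PLM column axiom, a step the paper asserts directly.
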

\begin{proof}
    First, notice that since $B$ is a leading CPLM, we know that $b_{1,1}=1$ and $b_{2,1}=b_{3,1}=0$. 

    Next, let's compute the first column of $C$. Since $b_{1,1}=1$ and $b_{2,1}=b_{3,1}=0$, we know
    \[c_{1,1}=a_{1,1}b_{1,1}+a_{1,2}b_{2,1}+a_{1,3}b_{3,1}=a_{1,1},\]
    \[c_{2,1}=a_{2,1}b_{1,1}+a_{2,2}b_{2,1}+a_{1,3}b_{3,1}=a_{2,1},\]
    and
    \[c_{3,1}=a_{3,1}b_{1,1}+a_{3,2}b_{2,1}+a_{3,3}b_{3,1}=a_{3,1}.\]
    In particular, we have that for all $1\leq i\leq 3$ that $c_{i,1}=a_{i,1}$.

    Next, consider the other entries in the first row of $C$. This is given by
    \[c_{1,2}=a_{1,1}b_{1,2}+a_{1,2}b_{2,2}+a_{1,3}b_{3,2}=0\]
    and
    \[c_{1,3}=a_{1,1}b_{1,3}+a_{1,2}b_{2,3}+a_{1,3}b_{3,3}=0\]
    since $a_{1,2}=a_{1,3}=b_{1,2}=b_{1,3}=0$.

    Lastly, consider $c_{i,j}$ for $2\leq i,j\leq 3$. Since $b_{1,j}=0$ for $2\leq j\leq 3$, it follows that
    \[c_{i,j}=a_{i,1}b_{1,j}+a_{i,2}b_{2,j}+a_{i,3}b_{3,j}=a_{i,2}b_{2,j}+a_{i,3}b_{3,j}.\]
    Further, notice that 
    \[A_1=\begin{pmatrix}
        a_{2,2}&a_{2,3}\\
        a_{3,2}&a_{3,3}
    \end{pmatrix}\]
    and
    \[B_1=\begin{pmatrix}
        b_{2,2}&b_{2,3}\\
        b_{3,2}&b_{3,3}
    \end{pmatrix}\]
    so the entries making up the block $c_{i,j}$ where $2\leq i,j\leq3$ give $A_1\cdot B_1$ precisely. Therefore, $C$ is a PLM with PLC $A_1\cdot B_1$.
\end{proof}
Note Propositions \ref{2CPLM1} and \ref{2CPLM2} completely determine multiplication of two CPLM matrices. In particular, we have the following immediate corollary.
\begin{corollary}
    $3\times 3$ Canonical Permutation-Like Matrices are closed under matrix multiplication.
\end{corollary}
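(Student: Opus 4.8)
The plan is to deduce this directly from the two preceding propositions by a single case split on the structure of the first column of $B$. Since $B$ is a CPLM, it is in particular a PLM, so its first column contains exactly one entry equal to $1$. Hence exactly one of two things occurs: either $b_{1,1}=1$, so that $B$ is a leading CPLM, or $b_{1,1}=0$ and the unique $1$ in the first column sits at $b_{x,1}=1$ for a unique $x\in\{2,3\}$, so that $B$ is not a leading CPLM. These two alternatives are exhaustive and mutually exclusive, and they are precisely the hypotheses of Propositions \ref{2CPLM2} and \ref{2CPLM1}, respectively.

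In the leading case I would invoke Proposition \ref{2CPLM2}, which states outright that $C=A\cdot B$ is a CPLM with PLC $A_1\cdot B_1$, so there is nothing further to check. In the non-leading case I would invoke Proposition \ref{2CPLM1}, which gives
\[A\cdot B=\begin{pmatrix}
0&\begin{matrix}0&0\end{matrix}\\
\begin{matrix}a_{2,x}\\a_{3,x}\end{matrix}&A_1\cdot B_1
\end{pmatrix}.\]
The key observation is that this is exactly the block form defining a CPLM: the $(1,2)$ and $(1,3)$ entries vanish, the leading element equals $0$, the vector component is $(a_{2,x},a_{3,x})^{\mathsf T}\in k^2$, and the lower-right $2\times 2$ block is $A_1\cdot B_1$.

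The only point requiring justification beyond reading off the two propositions is that the candidate PLC, namely $A_1\cdot B_1$, actually lies in $PL_2$; this is immediate from Proposition \ref{PLAlg} applied with $d=2$, since $A_1,B_1\in PL_2$ and $PL_2$ is closed under multiplication. I expect no genuine obstacle here: the corollary is essentially a bookkeeping consequence of Propositions \ref{2CPLM1} and \ref{2CPLM2} together with closure of $PL_2$, and the proof amounts to recording that in both cases the product again has the canonical block shape with a valid Permutation-Like Component.
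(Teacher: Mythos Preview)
Your proposal is correct and matches the paper's approach: the paper simply notes that Propositions \ref{2CPLM1} and \ref{2CPLM2} completely determine the product of two CPLMs and records the corollary as immediate, and your case split on whether $B$ is a leading CPLM, together with the observation that $A_1\cdot B_1\in PL_2$ by Proposition \ref{PLAlg}, is exactly the content behind that remark.
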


Before we consider some other cases of matrix multiplication, we want to give the following special case that follows as a corollary to Proposition \ref{2CPLM1}.
\begin{corollary}\label{GenRowsDim3}
    Let $A,B$ be CPLM with PLC $A_1$ and $B_1$, respectively. Further, suppose that $A_1=R_m^{(2)}$ for $1\leq m\leq 2$ and that $B$ is not a leading CPLM. Then, $A\cdot B=R_{m+1}^{(3)}$.
\end{corollary}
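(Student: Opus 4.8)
The plan is to apply Proposition \ref{2CPLM1} directly and then identify the resulting block structure with $R_{m+1}^{(3)}$. Since $B$ is not a leading CPLM, its unique $1$ in the first column lies in some row $x>1$, i.e.\ $b_{x,1}=1$ for $x\in\{2,3\}$. Proposition \ref{2CPLM1} then yields
\[
A\cdot B=\begin{pmatrix}
0&\begin{matrix}0&0\end{matrix}\\
\begin{matrix}a_{2,x}\\a_{3,x}\end{matrix}&A_1\cdot B_1
\end{pmatrix},
\]
so the task reduces to evaluating the bottom-right block $A_1\cdot B_1$ and the first column $(a_{2,x},a_{3,x})^{T}$ under the hypothesis $A_1=R_m^{(2)}$.

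For the bottom-right block I would invoke Proposition \ref{RowsDim2}: since $A_1=R_m^{(2)}$ and $B_1\in PL_2$, we have $A_1\cdot B_1=R_m^{(2)}\cdot B_1=R_m^{(2)}$. Thus this block is the $2\times 2$ row PLM whose single nonzero row sits in position $m$. For the first column I would unwind the identification of the PLC with rows and columns $2,3$ of $A$. The entries $a_{2,x}$ and $a_{3,x}$ with $x\in\{2,3\}$ are exactly entries of the PLC $A_1=R_m^{(2)}$, and since every column of $R_m^{(2)}$ equals the standard basis vector with a $1$ in position $m$, we obtain $(a_{2,x},a_{3,x})^{T}=(1,0)^{T}$ when $m=1$ and $(0,1)^{T}$ when $m=2$, independently of which $x$ occurs. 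In either case this column places its single $1$ in the row corresponding to position $m+1$ of the full $3\times 3$ matrix.

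Assembling the pieces, the first row of $A\cdot B$ is zero, while both the first column and the bottom-right block place all of their $1$'s in the row of the ambient matrix indexed by $m+1$. Hence every column of $A\cdot B$ has its unique $1$ in row $m+1$ with zeros elsewhere, which is precisely $R_{m+1}^{(3)}$. I expect no genuine obstacle: the one point demanding care is the index translation between the $2\times 2$ PLC coordinates and the $3\times 3$ ambient coordinates, after which the conclusion is immediate from Propositions \ref{2CPLM1} and \ref{RowsDim2}.
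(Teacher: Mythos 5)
Your argument is correct and is precisely the derivation the paper intends when it states this as an immediate consequence of Proposition \ref{2CPLM1}: apply the block formula, absorb $B_1$ into $R_m^{(2)}$ via Proposition \ref{RowsDim2}, and observe that the first column $(a_{2,x},a_{3,x})^{T}$ is a column of $A_1=R_m^{(2)}$, so all the $1$'s land in ambient row $m+1$. The index translation you flag is handled correctly, and nothing further is needed.
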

\begin{remark}
    Notice as a result of Corollary \ref{GenRowsDim3} that if $A$ is a CPLM with leading element zero and PLC $R_m^{(2)}$, then $A^2=R_{m+1}^{(3)}$. This computation will be helpful for approaching the problem of periodicity in Section 4.
\end{remark}
Next, we want to consider some other cases of matrix multiplication. We start with a definition.
\begin{definition}
    We say that $A\in PL_3$ is a pre-canonical Permutation-Like Matrix (PCPLM) if there exists a column permutation $\tau\in S_3$ such that $\tau\star A$ is a CPLM. 
\end{definition}
\begin{remark}
    Notice that every PLM with zero 1s or only one 1 in the first row is either a CPLM or a PCPLM.
\end{remark}
Now, the following comes immediately from the definition with Lemma \ref{SymGroupLemma}.
\begin{proposition}\label{PCPLM}
    Let $A,B\in PL_3$ such that $A$ is a CPLM and $B$ is a PCPLM such that $\tau\star B$ is a CPLM for some $\tau\in S_d$. Then,
    \[A\cdot B=\tau^{-1}\star(A\cdot (\tau\star B))\]
\end{proposition}
In particular, using Propositions \ref{2CPLM1} and \ref{2CPLM2}, we have explicit formulas that arise from Proposition \ref{PCPLM}. 

The last class of PLM we will focus on is related to when we are not able to get a PLM in the form of a CPLM.
\begin{definition}
    We say that $A\in PL_3$ is an irregular Permutation-Like Matrix (IPLM) if it is not a CPLM, PCPLM, or row PLM. 
\end{definition}
This definition leads into the following proposition.
\begin{proposition}\label{Irregular}
    Let $A,B\in PL_3$ such that $A$ is a CPLM with PLC $A_1$ and $B$ is an IPLM. Then, we have the following cases for their multiplication:
    \begin{enumerate}
        \item $\displaystyle\begin{pmatrix}
            a_{1,1}&0&0\\
            a_{2,1}&a_{2,2}&a_{2,3}\\
            a_{3,1}&a_{3,2}&a_{3,3}
            \end{pmatrix}\cdot\begin{pmatrix}
            1&1&0\\
            0&0&b\\
            0&0&c
        \end{pmatrix}=\begin{pmatrix}
            1&1&0\\
            0&0&a_{2,2}b+a_{2,3}c\\
            0&0&a_{3,2}b+a_{3,3}c
        \end{pmatrix}$
        \item $\displaystyle\begin{pmatrix}
            a_{1,1}&0&0\\
            a_{2,1}&a_{2,2}&a_{2,3}\\
            a_{3,1}&a_{3,2}&a_{3,3}
            \end{pmatrix}\cdot\begin{pmatrix}
            1&0&1\\
            0&b&0\\
            0&c&0
        \end{pmatrix}=\begin{pmatrix}
            1&0&1\\
            0&a_{2,2}b+a_{2,3}c&0\\
            0&a_{3,2}b+a_{3,3}c&0
        \end{pmatrix}$
        \item $\displaystyle\begin{pmatrix}
            a_{1,1}&0&0\\
            a_{2,1}&a_{2,2}&a_{2,3}\\
            a_{3,1}&a_{3,2}&a_{3,3}
            \end{pmatrix}\cdot\begin{pmatrix}
            0&1&1\\
            b&0&0\\
            c&0&0
        \end{pmatrix}=\begin{pmatrix}
            0&1&1\\
            a_{2,2}b+a_{2,3}c&0&0\\
            a_{3,2}b+a_{3,3}c&0&0
        \end{pmatrix}$
    \end{enumerate}
\end{proposition}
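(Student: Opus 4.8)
The plan is to prove all three identities by direct entrywise multiplication, but to organize the work so that only one genuine computation is needed. The key structural observation I would make first is that a \emph{leading} CPLM is block diagonal: if $A$ is a CPLM with $a_{1,1}=1$, then since $A$ is a PLM its first column contains a single $1$, forcing $a_{2,1}=a_{3,1}=0$; combined with $a_{1,2}=a_{1,3}=0$ (the defining property of a CPLM) this gives
\[
A=\begin{pmatrix} 1 & 0 & 0 \\ 0 & a_{2,2} & a_{2,3} \\ 0 & a_{3,2} & a_{3,3}\end{pmatrix}=\begin{pmatrix} 1 & 0 \\ 0 & A_1\end{pmatrix}.
\]
The hypothesis $a_{1,1}=1$ is exactly what produces the leading $1$ in the top-left corner of each stated product, so I would record explicitly that the proposition is being read with $A$ a leading CPLM (for a general CPLM the top-left entry of each product would be $a_{1,1}$ and the first column would not vanish).

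With $A=1\oplus A_1$, case (1) is immediate: left multiplication by $1\oplus A_1$ fixes the first row of $B$, giving the top row $(1,1,0)$, and applies $A_1$ to the lower $2\times 3$ block $\left(\begin{smallmatrix}0&0&b\\0&0&c\end{smallmatrix}\right)$ of $B$, yielding $\left(\begin{smallmatrix}0&0&a_{2,2}b+a_{2,3}c\\0&0&a_{3,2}b+a_{3,3}c\end{smallmatrix}\right)$. This is exactly the claimed right-hand side in (1). I would keep $b,c$ symbolic throughout, so that the two admissible PLM choices $(b,c)=(1,0)$ and $(b,c)=(0,1)$ are handled simultaneously.

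For cases (2) and (3) I would avoid repeating the computation by invoking the column action of $S_3$. The three matrices $B$ in (1)--(3) are column permutations of one another: writing $\mathbf{a}=(1,0,0)^{T}$ and $\mathbf{x}=(0,b,c)^{T}$, the three matrices $B^{(1)},B^{(2)},B^{(3)}$ have columns $[\mathbf{a}\,|\,\mathbf{a}\,|\,\mathbf{x}]$, $[\mathbf{a}\,|\,\mathbf{x}\,|\,\mathbf{a}]$, and $[\mathbf{x}\,|\,\mathbf{a}\,|\,\mathbf{a}]$ respectively. Hence there are $\tau_2,\tau_3\in S_3$ with $\tau_2\star B^{(1)}=B^{(2)}$ and $\tau_3\star B^{(1)}=B^{(3)}$ (take $\tau_2$ the transposition $(2\,3)$ and $\tau_3$ a suitable $3$-cycle). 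By Lemma \ref{SymGroupLemma}(2),
\[
A\cdot B^{(2)}=A\cdot(\tau_2\star B^{(1)})=\tau_2\star(A\cdot B^{(1)}),\qquad A\cdot B^{(3)}=\tau_3\star(A\cdot B^{(1)}),
\]
so the products in (2) and (3) are obtained from the product in (1) simply by permuting its columns. Checking that swapping columns $2,3$ of the matrix in (1) produces the matrix in (2), and that the appropriate $3$-cycle produces (3), is then immediate.

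The computation itself is routine; the only points requiring care are (i) recording the leading hypothesis $a_{1,1}=1$, and (ii) the bookkeeping of the column action, since $\tau\star(-)$ permutes columns by $\tau^{-1}$, so I must ensure the permutation applied to the product in (1) matches the one relating the matrices $B^{(i)}$. Neither is a genuine obstacle, so the main ``difficulty'' is purely organizational: setting up the block-diagonal form and the $S_3$-reduction so that the three displayed identities collapse to a single short verification.
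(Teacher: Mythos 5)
Your verification is correct, and for cases (2) and (3) it follows the same route as the paper: both dispose of them by a column permutation of case (1) via Lemma \ref{SymGroupLemma}. Where you differ is in emphasis, and in one respect your write-up is sharper than the paper's. The paper dismisses case (1) with ``direct computation,'' whereas you carry it out and, in doing so, catch a genuine imprecision: the displayed right-hand sides are only correct when $A$ is a \emph{leading} CPLM. For a general CPLM the product in case (1) has the column $(a_{1,1},a_{2,1},a_{3,1})^{T}$ in its first two columns rather than $(1,0,0)^{T}$ --- for instance $A=R_2^{(3)}$ is a non-leading CPLM and $A\cdot B=R_2^{(3)}$ by Proposition \ref{RowDim3}, not the matrix claimed. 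Your insistence on recording $a_{1,1}=1$ (which, as you note, forces $a_{2,1}=a_{3,1}=0$ and makes $A$ block diagonal, so that case (1) collapses to applying $A_1$ to the lower block of $B$) is exactly the right fix. On the other hand, you omit the part of the argument that occupies most of the paper's proof: showing that the three displayed forms of $B$ exhaust the IPLMs, i.e.\ that an IPLM must have exactly two $1$s in its first row (zero or one $1$ there forces $B$ to be a CPLM or PCPLM, and three $1$s forces $B=R_1^{(3)}$). Since Theorem \ref{PL3Thm} cites this proposition as covering \emph{all} products with $B$ an IPLM, that classification step should be restored; with it added, your argument is complete and, on the computational side, more careful than the original.
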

\begin{proof}
    Let $B$ be an IPLM. If there are three 1s in the first row, it is $R_1^{(3)}$ (and hence not a IPLM). So, $B$ has zero, one, or two 1s in the first row. 
    
    If $B$ has zero 1s in the first row, then it is of the form
    \[\begin{pmatrix}
        0&0&0\\
        \ast&\ast&\ast\\
        \ast&\ast&\ast
    \end{pmatrix}\]
    Since there is exactly one 1 in each column, that means that the 1 for the second and third column appears in the second or third row. This means that we have
    \[\begin{pmatrix}
        0&\begin{matrix}0&0\end{matrix}\\
        \begin{matrix}\ast\\\ast\end{matrix}&B_1
    \end{pmatrix}\]
    where $B_1\in PL_2$. Therefore, $B$ would have to be a CPLM (which contradicts that it is an IPLM). 
    
    Similarly, we can suppose that $B$ has one 1 in the first row, so up to column permutation it is of the form
    \[\begin{pmatrix}
        1&0&0\\
        0&\ast&\ast\\
        0&\ast&\ast
    \end{pmatrix}\]
    so we know there are two 0s and two 1s the fill in the remaining positions (one of each in each column) and hence $B$, up to column permutation, is a CPLM (i.e. $B$ is a CPLM or PCPLM, which contradicts that it is an IPLM). Therefore, if $B$ is an IPLM, there is exactly two 1s in the first row. These are covered in the three cases.
    
    Given this, case 1 results by direct computation and cases 2 and 3 result from a column permutation of case 1. 
\end{proof}

Lastly, the claim is that we have covered all of the possible cases.
\begin{theorem}\label{PL3Thm}
    Let $A,B\in PL_3$. Then, $A\cdot B$ is given, up to row permutation, by Propositions \ref{RowDim3}-\ref{Irregular}.
\end{theorem}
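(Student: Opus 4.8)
The plan is to show that the cases enumerated in Propositions \ref{RowDim3}--\ref{Irregular}, combined with the reduction furnished by Lemma \ref{CanonicalLemma}, genuinely exhaust every product $A\cdot B$ with $A,B\in PL_3$. The strategy is a two-stage case analysis: first reduce $A$ to canonical form, then classify $B$. Concretely, given arbitrary $A,B\in PL_3$, I would first invoke Lemma \ref{CanonicalLemma} to find $\sigma\in S_3$ so that $\sigma\ast A$ is either a CPLM or a row PLM. By part (1) of Lemma \ref{SymGroupLemma} we have $\sigma\ast(A\cdot B)=(\sigma\ast A)\cdot B$, so computing $(\sigma\ast A)\cdot B$ and then applying $\sigma^{-1}\ast(-)$ recovers $A\cdot B$; this is exactly the phrase ``up to row permutation'' in the statement. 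Hence it suffices to treat the case where $A$ itself is a CPLM or a row PLM.

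The second stage splits on the nature of $B$. I would observe that, by the Definition of IPLM, every $B\in PL_3$ falls into exactly one of four mutually exclusive classes: row PLM, CPLM, PCPLM (but not CPLM), or IPLM. I would then check that each of the four sub-cases is covered:
\begin{enumerate}
    \item If $A$ is a row PLM, Proposition \ref{RowDim3} gives $A\cdot B=A$ regardless of $B$.
    \item If $A$ is a CPLM and $B$ is a row PLM, Proposition \ref{CPLMwRowDim3} applies.
    \item If $A$ is a CPLM and $B$ is a CPLM, Propositions \ref{2CPLM1} and \ref{2CPLM2} handle the non-leading and leading subcases respectively.
    \item If $A$ is a CPLM and $B$ is a PCPLM, Proposition \ref{PCPLM} reduces to the CPLM case via a column permutation $\tau$, again using part (2) of Lemma \ref{SymGroupLemma}.
    \item If $A$ is a CPLM and $B$ is an IPLM, Proposition \ref{Irregular} gives the three explicit products.
\end{enumerate}
Writing this as a clean covering argument, the key is that after the row reduction on $A$, the five cited propositions are indexed precisely by the partition of $PL_3$ into these classes.

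The main obstacle I anticipate is verifying that the four classes for $B$ are genuinely exhaustive and well-defined once $A$ has been fixed as a CPLM or row PLM. The Remark following the PCPLM definition asserts that every PLM with at most one $1$ in its first row is a CPLM or PCPLM, and Proposition \ref{Irregular}'s proof shows an IPLM must have exactly two $1$s in its first row; I would want to stitch these observations together into an explicit statement that $\{\text{row PLM}, \text{CPLM}, \text{PCPLM}, \text{IPLM}\}$ partitions $PL_3$. A secondary subtlety is bookkeeping the symmetric-group actions: the outer $\sigma^{-1}\ast(-)$ from reducing $A$ and, in the PCPLM case, the inner $\tau^{-1}\star(-)$ from Proposition \ref{PCPLM} must be tracked so that the final answer is correctly described as $A\cdot B$ up to row permutation. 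Beyond this classification bookkeeping, no computation is required, since each individual product has already been established in the cited results.
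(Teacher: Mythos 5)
Your proposal is correct and follows essentially the same route as the paper: reduce $A$ to a row PLM or CPLM via Lemma \ref{CanonicalLemma}, note that the definition of IPLM makes the four-way classification of $B$ exhaustive by fiat, and dispatch each case to the corresponding proposition. The extra bookkeeping you flag (tracking $\sigma^{-1}\ast(-)$ and $\tau^{-1}\star(-)$) is handled implicitly in the paper via Lemma \ref{SymGroupLemma}, so there is no gap.
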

\begin{proof}
    Notice that if $A$ is not a row PLM (a case covered by Proposition \ref{RowDim3}), we can assume that $A$ is a CPLM without loss of generality by Lemma \ref{CanonicalLemma}. By the definition of IPLM, we know that every PLM is a row PLM, CPLM, PCPLM, or IPLM. All of these cases are covered by Propositions \ref{CPLMwRowDim3}-\ref{Irregular}.
\end{proof}

\subsection{The Case $d>3$}
Next, we will consider the case $d>3$. This section will follow in parallel with the case $d=3$.

\begin{definition}
    Let $R_m^{(d)}$ be the $d\times d$ matrix with entry $r_{i,j}$ in row $i$ and column $j$, where $r_{i,j}=1$ if $i=m$ and $r_{i,j}=0$ otherwise. We call $R^{(d)}_m$ a row Permutation-Like Matrix (row PLM).
\end{definition}
Related to this special matrix, we have the following proposition in parallel with Proposition \ref{RowDim3}.
\begin{proposition}\label{rowPLMGen}
    Let $d\geq1$ and $1\leq m\leq d$. If $A\in PL_d$ then $R_m^{(d)}\cdot A=R_m^{(d)}$.
\end{proposition}
\begin{proof}
    The proof of Proposition \ref{rowPLMGen} follows similarly to Propositions \ref{RowsDim2} and \ref{RowDim3}. 

    Let $B=R_m^{(d)}\cdot A$. Then, since $r_{i,j}=1$ if $i=m$ and $r_{i,j}=0$ otherwise, if $i\neq m$ we have
    \[b_{i,j}=\sum_{p=1}^dr_{i,p}a_{p,j}=0.\]
    Further, since PLM are closed under multiplication, we know $B$ is a PLM. Thus, $B$ has exactly one 1 in each column. Hence, because $b_{i,j}=0$ for all $i\neq m$, we have that $b_{m,j}=1$ for all $1\leq j\leq d$.
\end{proof}

Similar to the case $d=3$, we can also consider matrix multiplication of $A\cdot R_m^{(d)}$, where $1\leq m\leq d$..

\begin{proposition}\label{CPLMwRowLD}
    Let $A$ be a PLM and let $1\leq m\leq d$. If $B=A\cdot R^{(d)}_m$, then $b_{i,j}=a_{i,m}$ for all $1\leq i,j\leq d$.
\end{proposition}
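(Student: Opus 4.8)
The plan is to prove this by direct computation from the definition of matrix multiplication, mirroring exactly the argument used for the $d=3$ case in Proposition \ref{CPLMwRowDim3}. The key observation is that the entire identity is driven by the rigid structure of $R_m^{(d)}$, and in fact the hypothesis that $A$ is a PLM plays essentially no role in the identity itself: the formula $b_{i,j}=a_{i,m}$ would hold for an arbitrary $d\times d$ matrix $A$.

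First I would expand the $(i,j)$ entry of $B=A\cdot R_m^{(d)}$ using the standard formula $b_{i,j}=\sum_{p=1}^d a_{i,p}r_{p,j}$. Next I would substitute the definition of the row PLM: for every column index $j$, the only nonzero entry of $R_m^{(d)}$ in that column sits in row $m$, so $r_{m,j}=1$ and $r_{p,j}=0$ whenever $p\neq m$. Feeding this into the sum annihilates every term except the one with $p=m$, leaving $b_{i,j}=a_{i,m}r_{m,j}=a_{i,m}$. Since $i$ and $j$ were arbitrary, this establishes the claim for all $1\leq i,j\leq d$.

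There is no substantial obstacle here; the only thing worth remarking is the conceptual content behind the computation, namely that right-multiplication by $R_m^{(d)}$ simply copies the $m^{th}$ column of $A$ into every one of the $d$ columns of $B$. This is the natural $d$-dimensional analogue of Proposition \ref{CPLMwRowDim3}, and the passage from three explicit summands to a single collapsing sum over $p$ is the only change required.
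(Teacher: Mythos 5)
Your proof is correct and follows essentially the same computation as the paper: expand $b_{i,j}=\sum_{p=1}^d a_{i,p}r_{p,j}$, note that only the $p=m$ term survives, and conclude $b_{i,j}=a_{i,m}$. Your side remark that the hypothesis that $A$ is a PLM is not actually needed for this identity is accurate, but otherwise there is nothing to add.
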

\begin{proof}
    This follows a similar proof as Proposition \ref{CPLMwRowDim3}, which we give for completeness.
    
    Let $B=A\cdot R_m^{(d)}$. Recall that
    \[r_{i,j}=\begin{cases}
        1, & \ \ i=m\\
        0, & \ \ \textrm{otherwise}
    \end{cases}\]
    Therefore, for all $1\leq i,j\leq d$,
    \[b_{i,j}=\sum_{p=1}^da_{i,p}r_{p,j}=a_{i,m}r_{m,j}=a_{i,m}.\]
\end{proof}

Since we have considered the case of row PLM, we now define CPLM for $d>3$.
\begin{definition}
    We say that $A\in PL_d$ is a canonical Permutation-Like Matrix (CPLM) if it is a block matrix of the form
    \[A=\begin{pmatrix}
        \boxed{a_{1,1}}&\boxed{0 \ 0 \ \ldots \ 0}\\
        \boxed{\begin{matrix}
            a_{2,1}\\ a_{3,1}\\ \vdots \\ a_{d,1}
        \end{matrix}}&\boxed{A_1}
    \end{pmatrix}\]
    where $A_1\in PL_{d-1}$. We call $A_1$ the permutation-like component (PLC) of $A$. We say $A$ is a leading CPLM if $a_{1,1}=1$.
\end{definition}
\begin{remark}
    Notice that $R^{(d)}_m$ are CPLM for $m>1$.
\end{remark}
Similar to the case $d=3$, we have the following lemma that reduces the number of cases.
\begin{lemma}\label{PLSwapLargeDim}
    Let $A\in PL_d$. Then, there exists $\sigma\in S_d$ and CPLM $B$ such that $\tau\ast A=B$.
\end{lemma}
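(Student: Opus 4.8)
The plan is to mirror the proof of Lemma~\ref{CanonicalLemma}, which is exactly the $d=3$ instance of this statement, replacing the small-dimensional bookkeeping with a pigeonhole argument that is dimension-independent. The goal is to produce a single transposition (or the identity) $\sigma\in S_d$ so that $\sigma\ast A$ has every entry of its first row equal to zero except possibly the $(1,1)$ entry; once that is arranged, the block form required by the definition of CPLM is immediate.

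First I would pass to the $d\times(d-1)$ matrix $\tilde A$ obtained by deleting the first column of $A$, just as in the $d=3$ case. Since $A\in PL_d$, each of the $d-1$ columns of $\tilde A$ contains exactly one entry equal to $1$, so $\tilde A$ has precisely $d-1$ nonzero entries distributed among its $d$ rows. By the pigeonhole principle at least one row of $\tilde A$ is identically zero; let $r'$ be its index and set $\sigma=(1,r')\in S_d$ (taking $\sigma$ to be the identity when the row $r'=1$ is already available). Then $B=\sigma\ast A$ agrees with $A$ except that rows $1$ and $r'$ are interchanged, so by the choice of $r'$ the entries $b_{1,2},b_{1,3},\dots,b_{1,d}$ all vanish.

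The step I expect to require the most care is verifying that the lower-right $(d-1)\times(d-1)$ block of $B$ is genuinely a permutation-like component, i.e.\ lies in $PL_{d-1}$. For this I would first note that $B$ remains a PLM: permuting rows only rearranges the entries within each column, so the ``exactly one $1$ per column'' property is preserved (equivalently $B=P_\sigma\cdot A\in PL_d$ by Proposition~\ref{PLAlg}). Hence each of the columns $2,\dots,d$ of $B$ carries a unique $1$, and since the first row of $B$ is zero in those columns by construction, each such $1$ must sit in one of the rows $2,\dots,d$. That is precisely the assertion that the lower-right block has one $1$ per column, so it belongs to $PL_{d-1}$. Together with $b_{1,1}\in\{0,1\}$ and the vanishing of the rest of the first row, this shows $B$ matches the block form in the definition of CPLM, finishing the proof.

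Finally I would remark that, in contrast to the $d=3$ write-up, the row PLMs need no separate treatment here: the pigeonhole construction applies verbatim to $R_m^{(d)}$, yielding the identity swap when $m\neq1$ and the transposition $(1,2)$ when $m=1$. I also note that the conclusion should read $\sigma\ast A=B$, to match the quantifier $\sigma\in S_d$ and the row-action notation $\ast$.
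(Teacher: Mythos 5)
Your proof is correct and follows essentially the same route as the paper: delete the first column, find an all-zero row of $\tilde A$ by pigeonhole, and swap it into position $1$. The only differences are cosmetic improvements — you fold the row-PLM case into the general pigeonhole argument rather than treating it separately, you spell out why the lower-right block lies in $PL_{d-1}$, and you correctly flag the $\tau$/$\sigma$ typo in the statement.
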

Lemma \ref{PLSwapLargeDim} follows a similar proof as Lemma \ref{CanonicalLemma}, which we give for completeness.
\begin{proof}
    First, if $A$ is a row PLM, then either it is a CPLM with a leading element of zero or $(1,2)\ast A$ is a CPLM with a leading element of zero. Indeed, $R_m^{(d)}$ is a CPLM for $m>1$ with PLC $R_{m-1}^{(d-1)}$. Further, in the case of $R_1^{(d)}$, it follows that $(1,2)\ast R_1^{(d)}=R_2^{(d)}$. Next, suppose $A$ is not a row PLM.
    
    If we look at the $d\times d-1$ matrix $\Tilde{A}$ given by $\Tilde{a}_{i,j}=a_{i,j+1}$ for $1\leq i\leq d$ and $1\leq j\leq d-1$, we have at least one row will be all zeros, as there is at most 1 nonzero element in each column. Let $r'$ be the row of $\Tilde{A}$ with all zeros and take $\sigma=(1,r')$. So, $\sigma\ast A$ is the matrix given by swapping rows 1 and $r'$ in $A$. Further, by construction, we know that the first row of $\sigma\ast A$ in columns $2,3,\ldots, d$ are all zero. Thus, $\sigma\ast A$ is a CPLM.
\end{proof}
With Lemma \ref{PLSwapLargeDim}, we now reduce the problem down to $A\cdot B$, where $A$ is a CPLM and $B\in PL_d$. In parallel with the case $d=3$, we will start with considering where $B$ is a CPLM. This is broken down into the cases where $B$ is a leading CPLM and $B$ is not a leading CPLM. We start with the latter.
\begin{proposition}\label{CPLM1LargeDim}
    Let $A,B$ be CPLM with PLC $A_1$ and $B_1$, respectively. Then, if the leading element of $B$ is zero and $b_{x,1}=1$,
    \[A\cdot B=\begin{pmatrix}
        0 & 0 \ 0 \ \ldots \ 0\\
        v & A_1\cdot B_1
    \end{pmatrix}\]
    where $v$ is the $(x-1)^{st}$ column of $A_1$. 
\end{proposition}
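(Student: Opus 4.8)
The plan is to mirror the proof of Proposition \ref{2CPLM1}, now carried out in the block form for general $d$. Set $C=A\cdot B$, so that $c_{i,j}=\sum_{p=1}^d a_{i,p}b_{p,j}$. I would split the computation into three regions of $C$ — its first row, its first column below the top entry, and its lower-right $(d-1)\times(d-1)$ block — and check that these match the three pieces of the asserted block matrix.

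First I would record the structural facts forced by the hypotheses. Since $A$ is a CPLM, its first row is $(a_{1,1},0,\dots,0)$, so $a_{1,p}=0$ for $p>1$. Since $B$ is a CPLM whose leading element is zero, its entire first row vanishes: $b_{1,j}=0$ for $j>1$ because $B$ is canonical, and $b_{1,1}=0$ by the leading-element hypothesis. Finally, because every column of $B$ contains exactly one $1$ and $b_{x,1}=1$ with $x>1$, we have $b_{p,1}=0$ for all $p\neq x$.

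The three regions then fall out directly. For the top row, $c_{1,j}=a_{1,1}b_{1,j}$ since $a_{1,p}=0$ for $p>1$; as $b_{1,j}=0$ for every $j$, the top row of $C$ is identically zero. For the first column below the top, when $i\ge 2$ we get $c_{i,1}=\sum_{p} a_{i,p}b_{p,1}=a_{i,x}$, because $b_{x,1}=1$ is the only nonzero entry of the first column of $B$; hence this column is $(a_{2,x},\dots,a_{d,x})^{T}$. For the lower-right block, when $2\le i,j\le d$ the vanishing of $b_{1,j}$ kills the $p=1$ term, leaving $c_{i,j}=\sum_{p=2}^d a_{i,p}b_{p,j}$, which is precisely the $(i-1,j-1)$ entry of $A_1\cdot B_1$ under the reindexing $(A_1)_{i-1,p-1}=a_{i,p}$ and $(B_1)_{p-1,j-1}=b_{p,j}$.

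The one point I would take care to state correctly — and which I expect is the only genuine subtlety, everything else being the same mechanical splitting used when $d=3$ — is the index bookkeeping between $A$ and $A_1$. Column $j$ of $A_1$ is column $j+1$ of $A$ restricted to rows $2,\dots,d$, so the vector $(a_{2,x},\dots,a_{d,x})^{T}$ produced by the first-column computation is exactly column $x-1$ of $A_1$, i.e.\ the vector $v$ named in the statement. Matching this shift, in tandem with the analogous shift identifying the lower-right block with $A_1\cdot B_1$, is where the care is needed; assembling the three regions then yields the claimed block form for $A\cdot B$.
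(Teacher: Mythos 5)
Your proposal is correct and follows essentially the same route as the paper's proof: the same three-region split (first row, first column below the top entry, lower-right block), the same structural facts about $A$ and $B$, and the same index shift identifying $(a_{2,x},\dots,a_{d,x})^{T}$ with the $(x-1)^{\mathrm{st}}$ column of $A_1$ and the lower-right block with $A_1\cdot B_1$.
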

\begin{proof}
    Let $A\cdot B=C$ and denote the $i^{th}$ row and $j^{th}$ column of $A$, $B$, and $C$ by $a_{i,j}, b_{i,j},$ and $c_{i,j},$ respectively. Notice that the $(x-1)^{st}$ column of $A_1$ is given by the entries $a_{i+1,x}$ for $1\leq i\leq d-1$.

    Now, since $A$ and $B$ are CPLM, we know that $a_{1,j}=b_{1,j}=0$ for all $2\leq j\leq d$. In particular, we know that 
    \[c_{1,j}=\sum_{p=1}^d a_{1,p}b_{p,j}=a_{1,1}b_{1,j}\]
    for all $1\leq j\leq d$. However, if $j>1$, then $c_{1,j}=0$ since $b_{1,j}=0$, as $B$ is a CPLM. Further, since the leading element of $B$ is zero, we know that $c_{1,1}=0$, as $b_{1,1}=0$.

    Next, consider the remaining terms in the first column. By assumption, we took that $b_{x,1}=1$ and $b_{i,1}=0$ for all $i\neq x$. Thus, 
    \[c_{i,1}=\sum_{p=1}^da_{i,p}b_{p,1}=a_{i,x}b_{x,1}=a_{i,x}\]
    for all $2\leq i\leq d$. In particular, notice that $a_{i,x}$ is the $(i-1)^{st}$ row and $(x-1)^{st}$ column of $A_1$. Thus, the vector $v=\begin{pmatrix}
        c_{2,1}\\ c_{3,1} \\ \vdots \\ c_{d,1}
    \end{pmatrix}$ gives the $(x-1)^{st}$ column of $A_1$.

    Lastly, consider the terms $c_{i,j}$, where $2\leq i,j\leq d$. Remember that $b_{1,j}=0$ for all $j\geq2$, so
    \[c_{i,j}=\sum_{p=1}^d a_{i,p}b_{p,j}=\sum_{p=2}^d a_{i,p}b_{p,j}.\]
    However, since $a_{i+1,j+1}$ and $b_{i+1,j+1}$ for $1\leq i,j\leq d-1$ determines $A_1$ and $B_1$, respectively, we know that $c_{i,j}$ is given by the $(i-1)^{st}$ row and $(j-1)^{st}$ column of $A_1\cdot B_1$ for $2\leq i,j\leq d$. Therefore, $C$ is a CPLM and satisfies the statement of the proposition.
\end{proof}
Next, we give the parallel proposition to Proposition \ref{2CPLM2}.
\begin{proposition}\label{CPLM2LargeDim}
    Let $A,B$ be CPLM with PLC $A_1$ and $B_1$, respectively. If $B$ is a leading CPLM, then $C=A\cdot B$ is a CPLM with PLC $A_1\cdot B_1$. Further if $a_{i,j}$ and $c_{i,j}$ denote the $i^{th}$ row and $j^{th}$ column of $A$ and $C$, respectively, then $a_{m,1}=c_{m,1}$ for all $1\leq m\leq d$.
\end{proposition}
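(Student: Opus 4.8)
The plan is to prove the statement by direct computation of $C=A\cdot B$, mirroring the proofs of Propositions \ref{CPLM1LargeDim} and \ref{2CPLM2} but exploiting that the hypothesis ``$B$ is a leading CPLM'' makes the first column of $B$ as simple as possible. First I would record the constraints supplied by the hypotheses: since $B$ is a leading CPLM we have $b_{1,1}=1$, and since $B$ is in particular a PLM its first column contains exactly one $1$, forcing $b_{i,1}=0$ for all $i>1$; moreover, because both $A$ and $B$ are CPLM, $a_{1,j}=b_{1,j}=0$ for every $2\leq j\leq d$. These are exactly the relations that make every sum below collapse to a single term.

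Next I would compute the three relevant pieces of $C$ in turn. For the first column, using $b_{1,1}=1$ and $b_{p,1}=0$ for $p>1$,
\[c_{i,1}=\sum_{p=1}^d a_{i,p}b_{p,1}=a_{i,1}\]
for all $1\leq i\leq d$, which is the asserted identity $c_{m,1}=a_{m,1}$. For the remainder of the first row, since $a_{1,p}=0$ for $p>1$ only the $p=1$ term survives, giving $c_{1,j}=a_{1,1}b_{1,j}$; and since $b_{1,j}=0$ for $j>1$ this yields $c_{1,j}=0$ for all $j>1$. Together these two computations show that the top row and first column of $C$ already have the CPLM shape, with leading entry $c_{1,1}=a_{1,1}$.

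The last step, which I expect to be the only place demanding care, is to identify the lower-right $(d-1)\times(d-1)$ block of $C$ with $A_1\cdot B_1$. For $2\leq i,j\leq d$, the relation $b_{1,j}=0$ (valid since $j>1$) again removes the $p=1$ term, so
\[c_{i,j}=\sum_{p=2}^d a_{i,p}b_{p,j},\]
and under the identifications $A_1=(a_{i+1,j+1})$ and $B_1=(b_{i+1,j+1})$ this is precisely the $(i-1,j-1)$ entry of $A_1\cdot B_1$. The main obstacle is therefore purely bookkeeping: keeping the index shift between the full matrices and their PLCs consistent so that the product block lines up entry-for-entry with $A_1\cdot B_1$. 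Since $A_1\cdot B_1\in PL_{d-1}$ by Proposition \ref{PLAlg}, the displayed block is a genuine PLC, and together with the previous paragraph this shows $C$ is a CPLM with PLC $A_1\cdot B_1$, completing the argument.
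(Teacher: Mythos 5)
Your proposal is correct and follows essentially the same route as the paper's proof: direct computation of the first column, the first row, and the lower-right $(d-1)\times(d-1)$ block, each sum collapsing via $b_{1,1}=1$, $b_{i,1}=0$ for $i>1$, and $a_{1,j}=b_{1,j}=0$ for $j>1$. The only (harmless) additions are your explicit derivation of $b_{i,1}=0$ from the one-$1$-per-column property and your citation of Proposition \ref{PLAlg} to confirm the block $A_1\cdot B_1$ is a genuine PLC.
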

\begin{proof}
    First, consider $c_{1,j}$ for $1\leq j\leq d$. Since $B$ is a leading CPLM, we know that $b_{1,1}=1$ and $b_{1,j}=0$ for all $j>1$. Further, since $A$ is a CPLM, we know that $a_{1,j}=0$ for $j>1$. Thus, 
    \[c_{1,j}=\sum_{p=1}^da_{1,p}b_{p,j}=a_{1,1}b_{1,j}.\]
    This gives us that $c_{1,j}=0$ for $j>1$ and $c_{1,1}=a_{1,1}$.
    
    Next, consider $c_{i,1}$ for $1\leq i\leq d$. Since $B$ is a leading CPLM, we know that $b_{i,1}=0$ for all $i>1$ with $b_{1,1}=1$. Thus,
    \[c_{i,1}=\sum_{p=1}^da_{i,p}b_{p,1}=a_{i,1}b_{1,1}.\]
    Hence, $c_{i,1}=a_{i,1}$ for all $1\leq i\leq d$.

    Lastly, consider $c_{i,j}$ for all $1<i,j\leq d$. Notice that
    \[A_1=\begin{pmatrix}
        a_{2,2}&a_{2,3}&\ldots&a_{2,d}\\
        a_{3,2}&a_{3,3}&\ldots&a_{3,d}\\
        \vdots&\vdots&\ddots&\vdots\\
        a_{d,2}&a_{d,3}&\ldots&a_{d,d}
    \end{pmatrix}\]
    and
    \[B_1=\begin{pmatrix}
        b_{2,2}&b_{2,3}&\ldots&b_{2,d}\\
        b_{3,2}&b_{3,3}&\ldots&b_{3,d}\\
        \vdots&\vdots&\ddots&\vdots\\
        b_{d,2}&b_{d,3}&\ldots&b_{d,d}
    \end{pmatrix}.\]
    Now, since $a_{i,1}=b_{1,j}=0$ for all $1<i,j\leq d$, as $A$ is a CPLM and $B$ is a leading CPLM, we know that
    \[c_{i,j}=\sum_{p=1}^da_{i,p}b_{p,j}=\sum_{p=2}^da_{i,p}b_{p,j}.\]
    Notice that the matrix $C_1=\begin{pmatrix}
        c_{2,2}&c_{2,3}&\ldots&c_{2,d}\\
        c_{3,2}&c_{3,3}&\ldots&c_{3,d}\\
        \vdots&\vdots&\ddots&\vdots\\
        c_{d,2}&c_{d,3}&\ldots&c_{d,d}
    \end{pmatrix}$ is precisely $A_1\cdot B_1$. Therefore, $C$ is a CPLM with PLC $C_1=A_1\cdot B_1$.
\end{proof}
Propositions \ref{rowPLMGen} and \ref{CPLM2LargeDim} yield the following corollary.
\begin{corollary}\label{GenRowsBigD}
    Let $A,B$ be CPLM with PLC $A_1$ and $B_1$, respectively. Further, suppose that $A_1=R_m^{(d-1)}$ for $1\leq m\leq d-1$ and that $B$ is not a leading CPLM. Then, $A\cdot B=R_{m+1}^{(d)}$.
\end{corollary}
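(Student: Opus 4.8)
The plan is to read this off directly from Proposition~\ref{CPLM1LargeDim} together with the row-PLM absorption of Proposition~\ref{rowPLMGen}, mirroring how Corollary~\ref{GenRowsDim3} was obtained from Proposition~\ref{2CPLM1} in the $d=3$ case. Since $B$ is a CPLM that is not a leading CPLM, its leading element is zero, so the hypotheses of Proposition~\ref{CPLM1LargeDim} are met: writing $b_{x,1}=1$ for the location of the $1$ in the first column of $B$, we obtain
\[
A\cdot B=\begin{pmatrix}
0 & 0 \ 0 \ \ldots \ 0\\
v & A_1\cdot B_1
\end{pmatrix},
\]
where $v$ is the $(x-1)^{st}$ column of $A_1$.

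First I would simplify the PLC block. Because $A_1=R_m^{(d-1)}$ and $B_1\in PL_{d-1}$, Proposition~\ref{rowPLMGen} applied in dimension $d-1$ gives $A_1\cdot B_1=R_m^{(d-1)}\cdot B_1=R_m^{(d-1)}$. Next I would identify $v$: every column of $R_m^{(d-1)}$ is the standard basis vector $e_m$ (a $1$ in position $m$, zeros elsewhere), so its $(x-1)^{st}$ column is $e_m$ independently of $x$, and hence $v=e_m$. Substituting these two facts back into the block form shows that $A\cdot B$ has an all-zero first row, a first column whose only nonzero entry is in position $m+1$, and PLC equal to $R_m^{(d-1)}$.

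It then remains to assemble these pieces into $R_{m+1}^{(d)}$, and this index bookkeeping is the only point that requires care. The PLC occupies rows and columns $2,\ldots,d$, so its unique nonzero row (row $m$ of $R_m^{(d-1)}$) becomes row $m+1$ of $A\cdot B$, and the nonzero entry of $v$, namely $c_{m+1,1}=1$, also sits in row $m+1$. Thus the only nonzero row of $A\cdot B$ is row $m+1$, in which every entry equals $1$, while all other rows vanish; since $m\geq 1$ forces $m+1>1$, this is precisely $R_{m+1}^{(d)}$. No genuine obstacle arises beyond this alignment of indices, as both cited propositions already do the computational work.
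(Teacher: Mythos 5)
Your proof is correct and follows essentially the route the paper intends: the paper leaves this corollary without an explicit proof, asserting it follows from the cited propositions, and your derivation (block form from Proposition~\ref{CPLM1LargeDim}, then $A_1\cdot B_1=R_m^{(d-1)}$ by Proposition~\ref{rowPLMGen}, then the index shift) supplies exactly the missing computation. Note that you correctly invoke Proposition~\ref{CPLM1LargeDim} for the non-leading case, whereas the paper's preceding sentence cites Proposition~\ref{CPLM2LargeDim}, which appears to be a typo.
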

Now, we have covered the case of CPLM, so we will consider the parallel case to that covered in Proposition \ref{PCPLM}.

\begin{definition}
    Let $A\in PL_d$. Then, we say that $A$ is a pre-Canonical Permutation-Like Matrix (PCPLM) if there exists $\tau\in S_d$ such that $\tau\star A$ is a CPLM. 
\end{definition}

Now, the definition of PCPLM leads immediately to the following.
\begin{proposition}
    Let $A,B\in PL_d$ such that $A$ is a CPLM and $B$ is a PCPLM such that $\tau\star B$ is a CPLM for some $\tau\in S_d$. Then,
    \[A\cdot B=\tau^{-1}\star(A\cdot (\tau\star B)).\]
\end{proposition}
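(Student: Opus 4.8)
The plan is to derive the identity directly from Lemma~\ref{SymGroupLemma}(2), so that essentially no new computation is required. Since $B$ is a PCPLM, by hypothesis there is a permutation $\tau\in S_d$ for which $\tau\star B$ is a CPLM. Taking $p=d$ in Lemma~\ref{SymGroupLemma}(2) (here $A$ and $B$ are both $d\times d$, so $m=n=p=d$) gives
\[
\tau\star(A\cdot B)=A\cdot(\tau\star B).
\]
This is the only structural fact about $\star$ that the argument needs.

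The second step is to invert the column action. Because $\star$ is a \emph{left} group action of $S_d$, we have $\tau^{-1}\star(\tau\star M)=(\tau^{-1}\tau)\star M=M$ for every $d\times d$ matrix $M$. Applying $\tau^{-1}\star$ to both sides of the displayed identity and using this with $M=A\cdot B$ yields
\[
A\cdot B=\tau^{-1}\star\bigl(A\cdot(\tau\star B)\bigr),
\]
which is exactly the claim.

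I expect no genuine obstacle here: the proposition is a formal consequence of the group-action identity, and as noted it ``leads immediately'' from the definition of PCPLM. The one point deserving care is confirming that $\star$ really is a left action, i.e.\ that $\tau^{-1}\star$ undoes $\tau\star$. This is precisely where the convention of permuting columns by $\tau^{-1}$ (rather than $\tau$) in the definition of $\star$ is used: column permutation as right multiplication by permutation matrices composes contravariantly, and inserting the inverse in the definition converts that right action into the left action for which $\tau^{-1}\star(\tau\star M)=M$ holds. It is also this convention that forces the inverse to appear on the outside on the right-hand side.

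Beyond that, the content of the proposition is computational rather than logical. Since $\tau\star B$ is a CPLM and $A$ is a CPLM, the product $A\cdot(\tau\star B)$ is now given explicitly by Propositions~\ref{CPLM1LargeDim} and~\ref{CPLM2LargeDim}, so the identity reduces multiplication of a CPLM against any PCPLM to the already-solved CPLM--CPLM case followed by a single column permutation by $\tau^{-1}$.
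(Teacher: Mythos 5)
Your proof is correct and matches the paper's intent exactly: the paper offers no written proof, stating only that the identity follows immediately from the definition of PCPLM together with Lemma~\ref{SymGroupLemma}, which is precisely the two-step argument (apply part (2) of the lemma, then undo the left column action with $\tau^{-1}\star$) that you spell out. Your added remark about why the $\tau^{-1}$ convention in the definition of $\star$ makes it a genuine left action is a correct and useful clarification, not a deviation.
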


Lastly, we would like to consider the following case.
\begin{definition}
    Let $A\in PL_d$. Then, we say that $A$ is an Irregular Permutation-Like Matrix (IPLM) if it is not a row PLM, CPLM, or PCPLM.
\end{definition}
    For $B\in PL_d$, if $B$ is irregular and we take $\zeta(B)$ to be the number of nonzero entries in the first row of $B$, then this definition of IPLM implies that $\zeta(B)>1$. This follows by a similar proof as the argument in the proof of Proposition \ref{Irregular}, since $B$ is not CPLM or PCPLM. Further, if all of the entries in the first row are 1, then $B$ is a row PLM, hence not an IPLM. So, the only case left to consider is that of $1<\zeta(B)<d$.
\begin{remark}
    We use $\zeta(B)$ because it is related to the number of zero entries in the first row. Indeed, $d-\zeta(B)$ is the number of zero entries in the first row of $B$.
\end{remark}
\begin{proposition}\label{IPLM2}
    Let $X(A,n)$ be the $d-1\times d-1$ matrix given by 
    \[X(A,n)=\begin{pmatrix}
        a_{2,1}&a_{2,1}&\ldots&a_{2,1}&0&0&\ldots&0\\
        a_{3,1}&a_{3,1}&\ldots&a_{3,1}&0&0&\ldots&0\\
        a_{4,1}&a_{4,1}&\ldots&a_{4,1}&0&0&\ldots&0\\
        \vdots&\vdots&\ddots&\vdots&\vdots&\vdots&\ddots&\vdots\\
        a_{d,1}&a_{d,1}&\ldots&a_{d,1}&0&0&\ldots&0
    \end{pmatrix}\]
    where the first n columns of row i have the entries $a_{i+1,1}$.
    
    Suppose that $A,B\in PL_d$ such that $A$ is a CPLM with PLC $A_1$. Further suppose $B$ is an IPLM with $1<\zeta(B)<d$.
    \begin{enumerate}
        \item Suppose $B$ is a block matrix of the form $B=\begin{pmatrix}
            1 \ 1 \ \ldots \ 1 & 0 \ 0 \ \ldots \ 0\\
            \textbf{0} & B_1
        \end{pmatrix}$ where $\textbf{0}$ is the $d\times \zeta(B)$ zero matrix and $B_1$ is a $d-1\times d-\zeta(B)$ matrix. Take $B_2$ to be the $d-1\times d-1$ matrix given by augmenting $B_1$ on the left by the $d-1\times \zeta(B)-1$ zero matrix. Then $A\cdot B$ is given by the block matrix
        \[A\cdot B=\begin{pmatrix}
            \begin{matrix}a_{1,1} & a_{1,1} & \ldots & a_{1,1}\end{matrix} & \begin{matrix}0 & 0 & \ldots & 0\end{matrix}\\
            &\\
            \begin{matrix}v & v & \ldots & v\end{matrix} & X(A,\zeta(B)-1) + A_1\cdot B_2
        \end{pmatrix}\]
        where $v=\begin{pmatrix}
            a_{2,1}\\ a_{3,1}\\ \vdots \\ a_{d,1}
        \end{pmatrix}$.
        \item Otherwise, there exists a $\tau\in S_d$ such that $\Tilde{B}=\tau\star B$ is of the form of the previous case and
        \[A\cdot B=\tau^{-1}\star\begin{pmatrix}
            \begin{matrix}a_{1,1} & a_{1,1} & \ldots & a_{1,1}\end{matrix} & \begin{matrix}0 & 0 & \ldots & 0\end{matrix}\\
            &\\
            \begin{matrix}v & v & \ldots & v\end{matrix} & X(A,\zeta(\Tilde{B})-1) + A_1\cdot \Tilde{B}_2
        \end{pmatrix}\]
        where $v=\begin{pmatrix}
            a_{2,1}\\ a_{3,1}\\ \vdots \\ a_{d,1}
        \end{pmatrix}$.
    \end{enumerate}
\end{proposition}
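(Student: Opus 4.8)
The plan is to prove Case~1 by a direct entrywise computation and then obtain Case~2 for free from the column-permutation action. I would begin by recording the structural facts I will use: since $A$ is a CPLM, $a_{1,p}=0$ for all $p\geq 2$ and the submatrix of $A$ in rows and columns $2,\dots,d$ is $A_1$; since $B$ is in the Case~1 block form, its first row is $\underbrace{1\,\cdots\,1}_{\zeta(B)}\,\underbrace{0\,\cdots\,0}_{d-\zeta(B)}$, each of the first $\zeta(B)$ columns has its unique $1$ in row $1$ (so those columns are zero below row $1$), and the rows $2,\dots,d$ of the last $d-\zeta(B)$ columns form $B_1$.

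Setting $C=A\cdot B$ and writing $c_{i,j}=\sum_{p=1}^d a_{i,p}b_{p,j}$, I would then evaluate $C$ on three regions. For the first row, $a_{1,p}=0$ for $p\geq 2$ forces $c_{1,j}=a_{1,1}b_{1,j}$, which equals $a_{1,1}$ for $j\leq\zeta(B)$ and $0$ otherwise. For a column $j$ with $1\leq j\leq\zeta(B)$, only the term $b_{1,j}=1$ survives, so $c_{i,j}=a_{i,1}$ for every $i$; that is, each of the first $\zeta(B)$ columns of $C$ is the first column of $A$, contributing $\zeta(B)$ copies of $v=(a_{2,1},\dots,a_{d,1})^{T}$ below the top row. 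For the remaining block $2\leq i\leq d$, $\zeta(B)<j\leq d$, the vanishing of $b_{1,j}$ lets me reindex $c_{i,j}=\sum_{p=2}^d a_{i,p}b_{p,j}=(A_1\cdot B_1)_{i-1,\,j-\zeta(B)}$, so this block is exactly $A_1\cdot B_1$.

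The one genuinely nontrivial point is repackaging the bottom-right $(d-1)\times(d-1)$ block in the claimed closed form $X(A,\zeta(B)-1)+A_1\cdot B_2$. Here I would argue by disjoint supports: the matrix $X(A,\zeta(B)-1)$ has its first $\zeta(B)-1$ columns equal to $v$ and all later columns zero, whereas $B_2=[\,\mathbf{0}\mid B_1\,]$ begins with $\zeta(B)-1$ zero columns, so $A_1\cdot B_2=[\,\mathbf{0}\mid A_1\cdot B_1\,]$ has its first $\zeta(B)-1$ columns zero. The two summands therefore occupy complementary column ranges, and their sum is the concatenation $[\,\underbrace{v\,\cdots\,v}_{\zeta(B)-1}\mid A_1\cdot B_1\,]$; adjoining the single explicit copy of $v$ in the $(2,1)$ slot restores the full count of $\zeta(B)$ copies of $v$ and reproduces the computation of the previous paragraph. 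Keeping the index shift $j\mapsto j-\zeta(B)$ and the two roles of $v$ straight (the explicit copy versus the $\zeta(B)-1$ copies hidden inside $X$) is the main bookkeeping obstacle; everything else is routine.

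Finally, Case~2 follows with no further computation. Because $B$ is a PLM, every column carrying a $1$ in its first row is zero below the first row, so a single column permutation $\tau$ gathering the $\zeta(B)$ first-row $1$s into the leading positions already produces $\tilde B=\tau\star B$ in the Case~1 block form, with $B_1$ the rows-$2,\dots,d$ part of the remaining columns. Then $B=\tau^{-1}\star\tilde B$, and Lemma~\ref{SymGroupLemma} gives $A\cdot B=A\cdot(\tau^{-1}\star\tilde B)=\tau^{-1}\star(A\cdot\tilde B)$, into which I substitute the Case~1 formula for $A\cdot\tilde B$ to obtain the stated expression.
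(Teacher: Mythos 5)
Your proposal is correct and follows essentially the same route as the paper's proof: a direct entrywise computation of $C=A\cdot B$ over the three regions (first row, the first $\zeta(B)$ columns, and the remaining lower-right block), with the final repackaging resting on the fact that $X(A,\zeta(B)-1)$ and $A_1\cdot B_2$ occupy complementary column ranges, and Case 2 reduced to Case 1 via the column-permutation action of Lemma \ref{SymGroupLemma}. Your disjoint-supports phrasing of that last step is slightly more explicit than the paper's ``up to addition by an element of $A$'' wording, but the argument is the same.
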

\begin{proof}
    The latter statement is immediate since for all IPLM $B$, we know $1<\zeta(B)<d$ and hence we can put $B$ in the form of the former case, up to column permutations. So, it suffices to prove the former.

    Suppose that $B=\begin{pmatrix}
            1 \ 1 \ \ldots \ 1 & 0 \ 0 \ \ldots \ 0\\
            \textbf{0} & B_1
        \end{pmatrix}$ and let $C=A\cdot B$. Take $B_2$ to be the $d-1\times d-1$ matrix given by augmenting $B_1$ on the left by the zero matrix. We will compute $c_{i,j}$ for $1\leq i,j\leq d$.

    First, consider $c_{i,1}$ for all $1\leq i\leq d$. Now, there exists some $1\leq m\leq d$ such that $a_{m,1}=1$ and $a_{i,1}=0$ for $i\neq m$. Further, we know that $b_{1,1}=1$ and $b_{i,1}=0$ for $i>1$. Thus,
    \[c_{i,1}=\sum_{p=1}^da_{i,p}b_{p,1}=a_{i,1}.\]
    So, $c_{m,1}=1$ and $c_{i,1}=0$ for $i\neq m$.

    Next, consider $c_{1,j}$ for all $1<j\leq d$. Since $A$ is a CPLM, we know that $a_{1,j}=0$ for $j>1$. So,

    \[c_{1,j}=\sum_{p=1}^da_{1,p}b_{p,j}=a_{1,1} b_{1,j}\]
    However, there exists $N>1$ such that $b_{1,j}=0$ if $j>N$ and $b_{1,j}=1$ if $j\leq N$. Thus, $c_{1,j}=0$ if $j>N$ and $c_{1,j}=a_{1,1}$ if $j\leq N$.

    Now we will consider $c_{i,j}$ for all $1<i,j\leq d$. Since $b_{1,j}=0$ for $j>N$, we know that
    \[c_{i,j}=\sum_{p=1}^da_{1,p}b_{p,j}=\sum_{p=2}^d a_{i,p}b_{p,j}\]
    for $j>N$. Also, since $b_{1,j}=1$ for $j\leq N$, we know that
    \[c_{i,j}=a_{i,1}+\sum_{k=2}^da_{i,k}b_{k,j}.\]
    In particular, notice that, up to addition by an element of $A$ in the first $\zeta(B)-1$ columns, the matrix $C_1=\begin{pmatrix}
        c_{2,2}&c_{2,3}&\ldots&c_{2,d}\\
        c_{3,2}&c_{3,3}&\ldots&c_{3,d}\\
        \vdots&\vdots&\ddots&\vdots\\
        c_{d,2}&c_{d,3}&\ldots&c_{d,d}
    \end{pmatrix}$ corresponds to $A_1\cdot B_2$. Further, this element of $A$ is explicitly given in $X(A,\zeta(B)-1)$ and since addition is pointwise, $C_1=X(A,\zeta(B)-1)+A_1\cdot B_2$. Therefore, we have computed $C$ explicitly and the statement holds.
\end{proof}
Proposition \ref{IPLM2} covers our final case, culminating in the following theorem.
\begin{theorem}
    Let $A,B\in PL_d$ for $d>3$. Then, $A\cdot B$ is given, up to row permutation, by Propositions \ref{rowPLMGen}-\ref{IPLM2}.
\end{theorem}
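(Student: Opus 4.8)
The plan is to mirror the proof of Theorem \ref{PL3Thm}, which is entirely a matter of exhausting cases: the content of the individual products was already established in Propositions \ref{rowPLMGen}--\ref{IPLM2}, so all that remains is to check that every pair $(A,B)$ falls under one of those propositions, possibly after a row permutation. First I would dispose of the case where $A$ is a row PLM: here Proposition \ref{rowPLMGen} gives $R_m^{(d)} \cdot B = R_m^{(d)}$ directly, with no row permutation needed.

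Next, assuming $A$ is not a row PLM, I would apply Lemma \ref{PLSwapLargeDim} to produce $\sigma \in S_d$ for which $A' := \sigma \ast A$ is a CPLM. The key reduction is Lemma \ref{SymGroupLemma}(1): since $\sigma \ast (A \cdot B) = (\sigma \ast A) \cdot B = A' \cdot B$, we obtain $A \cdot B = \sigma^{-1} \ast (A' \cdot B)$, so computing $A \cdot B$ up to row permutation is precisely computing the product $A' \cdot B$ of a CPLM with a PLM. This is exactly the ``up to row permutation'' appearing in the statement.

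With $A$ now a CPLM, I would classify $B$. By the definition of IPLM, every element of $PL_d$ is a row PLM, a CPLM, a PCPLM, or an IPLM, and these are exhaustive by construction. I then match each type to its governing proposition: if $B$ is a row PLM, Proposition \ref{CPLMwRowLD} applies; if $B$ is a CPLM, Propositions \ref{CPLM1LargeDim} and \ref{CPLM2LargeDim} cover the non-leading and leading subcases; if $B$ is a PCPLM, the proposition following Corollary \ref{GenRowsBigD} reduces $A' \cdot B$ to a CPLM product via a column permutation $\tau$; and if $B$ is an IPLM, Proposition \ref{IPLM2} applies, using the observation recorded just before that proposition that an IPLM necessarily satisfies $1 < \zeta(B) < d$.

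I expect no serious obstacle, since this is a completeness and bookkeeping statement rather than a new computation. The one point requiring a little care is verifying that the four categories for $B$ genuinely partition $PL_d$---which holds essentially by fiat, as IPLM is defined to be the complement of the union of the other three---together with making explicit, again via Lemma \ref{SymGroupLemma}, that the outer row permutation $\sigma^{-1}$ (and, in the PCPLM and IPLM cases, an inner column permutation $\tau^{-1}$) accounts for the stated permutation ambiguity without changing which proposition governs the underlying product.
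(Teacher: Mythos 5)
Your proof is correct and follows essentially the same route as the paper: dispose of the case where $A$ is a row PLM via Proposition \ref{rowPLMGen}, reduce to $A$ a CPLM via Lemma \ref{PLSwapLargeDim}, and then classify $B$ as row PLM, CPLM, PCPLM, or IPLM, matching each to its governing proposition. Your version is somewhat more careful than the paper's, in that you make explicit the role of Lemma \ref{SymGroupLemma} in justifying the ``up to row permutation'' reduction, which the paper leaves implicit.
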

\begin{proof}
    This follows similarly to the case of $d=3$. If $A$ is a row PLM, we can apply Proposition \ref{rowPLMGen}. Otherwise, we can assume that $A$ is a CPLM by Lemma \ref{PLSwapLargeDim}. By the definition of IPLM, we know that every PLM is a row PLM, CPLM, PCPLM, or IPLM. All of these cases are covered by Propositions \ref{CPLM1LargeDim}-\ref{IPLM2}.
\end{proof}

\section{Properties of Permutation-Like Matrices}
In this section, we will give some properties of PLM, focusing on $A\in PL_d$ for $d=2$ or $d=3$. Throughout this section, assume that $k$ has characteristic 0.

\subsection{Periodicity of Permutation-Like Matrices}
First, we will look at the periodicity of PLM. Recall that a periodic matrix $A$ is a square matrix such that $A^{k+1}=A$. The minimal such $k$ is called the index of $A$ \cite{Ayres}. 
\begin{proposition}\label{Period}
    Let $A\in PL_d$ for $d=2$ or $d=3$. Then, $A$ is a periodic matrix or there exists $k\geq1$ such that $A^2$ is a row PLM.
\end{proposition}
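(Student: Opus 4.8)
The plan is to translate the statement into the language of self-maps of $\{1,\dots,d\}$, which collapses both $d=2$ and $d=3$ into a single case analysis. To each $A\in PL_d$ I would associate the function $f_A\colon\{1,\dots,d\}\to\{1,\dots,d\}$ sending $j$ to the unique row index $i$ with $a_{i,j}=1$; this is well defined by the definition of a PLM. The computation in the proof of Proposition~\ref{PLAlg} shows $c_{i,j}=a_{i,p_j}$ with $p_j$ the unique row where column $j$ of $B$ has its $1$, i.e. $p_j=f_B(j)$, whence $f_{A\cdot B}=f_A\circ f_B$. Since $A$ is recovered from $f_A$, the assignment $A\mapsto f_A$ is a bijection carrying matrix products to composition; in particular $f_{A^n}=f_A^{\,n}$ (the $n$-fold composite), so $A^{k+1}=A$ holds for some $k\ge1$ exactly when $f_A^{\,k+1}=f_A$, and $A$ is a row PLM exactly when $f_A$ is constant. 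Thus the proposition reduces to: for $f=f_A$ with $d\in\{2,3\}$, either $f^{k+1}=f$ for some $k\ge1$, or $f^2$ is constant.

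Writing $S=f(\{1,\dots,d\})$ for the image, note $f(S)\subseteq S$, so $f$ restricts to a self-map $f|_S$ of the finite set $S$. First I would record the general fact that $f^{k+1}=f$ for some $k\ge1$ if and only if $f|_S$ is a bijection: if $f|_S$ is a permutation of $S$ then $(f|_S)^N=\mathrm{id}_S$ for $N$ its order, and since every $f(x)$ lies in $S$ this gives $f^{N+1}=f$; conversely $f^{k+1}=f$ forces $f^k=\mathrm{id}$ on $S=\mathrm{im}\,f$, so $f|_S$ is injective, hence bijective. Then I would split on $r=\lvert S\rvert$. If $r=1$ then $f$ is constant, $A$ is a row PLM and $A^2=A$ is periodic; if $r=d$ then $f$ is a bijection, again periodic. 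These two cases dispose of every possibility when $d=2$, so the only remaining case is $d=3$ with $r=2$.

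The crux is that single remaining case. For $d=3$ and $r=2$, write $S=\{a,b\}$. Either $f|_S$ is a bijection of $\{a,b\}$, in which case $A$ is periodic by the characterization above, or $f|_S$ is not injective, i.e. $f(a)=f(b)=:t$ with $t\in S$. In the latter case every point maps into $S$, so for all $x$ we get $f^2(x)=f(f(x))=t$ because $f(x)\in\{a,b\}$ and $f$ sends both $a$ and $b$ to $t$; hence $f^2\equiv t$ is constant and $A^2=R_t^{(3)}$ is a row PLM. This non-injective subcase is precisely the situation isolated in the remark following Corollary~\ref{GenRowsDim3} (a CPLM with leading entry zero and PLC $R_m^{(2)}$), and it is the only place where genuine non-periodicity can arise; verifying that it always collapses $f^2$ to a constant is the one substantive step, the rest being bookkeeping. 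I would close by assembling the cases: every $f_A$ is accounted for, so every $A\in PL_d$ with $d\in\{2,3\}$ is periodic or has $A^2$ a row PLM. (The characteristic-zero hypothesis of the section plays no role here, the argument being purely combinatorial.)
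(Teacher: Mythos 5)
Your proof is correct, but it takes a genuinely different route from the paper's: the paper disposes of Proposition~\ref{Period} by asserting it ``can be verified computationally,'' i.e.\ by checking the $4$ matrices of $PL_2$ and the $27$ of $PL_3$ and recording that either $A^2$ is a row PLM or one of $A^2,A^3,A^4$ equals $A$, whereas you replace the finite check with a structural argument. Your key move --- reading off from the computation $c_{i,j}=a_{i,p_j}$ in Proposition~\ref{PLAlg} that $A\mapsto f_A$ is a monoid isomorphism from $PL_d$ onto the full transformation monoid of self-maps of $\{1,\dots,d\}$, under which row PLMs are the constant maps and Permutation Matrices the bijections --- is sound; your characterization of periodicity via bijectivity of $f|_{\mathrm{im}\,f}$ is argued correctly in both directions; and the one residual case ($d=3$, image of size $2$, $f$ non-injective on its image) does force $f^2$ constant exactly as you say. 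You are also right that the characteristic-zero hypothesis is irrelevant here, since exactly one summand in each $c_{i,j}$ is nonzero. What your approach buys beyond rigor and brevity: it pinpoints exactly which matrices are non-periodic (matching the remark after Corollary~\ref{GenRowsDim3}); it makes clear why the dichotomy with $A^2$ specifically is special to $d\le 3$ (for $d=4$ one can need $A^3$); the identification of $PL_d$ with the full transformation monoid answers the paper's final open problem; and the general fact that any self-map of a finite set satisfies $f^{m+p}=f^m$ would extend the eigenvalue corollary (eigenvalues are $0$ or roots of unity) to all $d$, which the paper leaves as a conjecture. The only information the paper's brute-force check yields that yours does not is the explicit list of possible periods.
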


Proposition \ref{Period} can be verified computationally, where either $A^2$ is a row PLM or $A^2$, $A^3$, or $A^4$ is $A$. Further, since row PLM absorb right multiplication in $PL_d$, we know have a distinction here between these two classes of PLM: these ones that are periodic and the ones that have $A^k$ are row PLM for sufficiently large $k$. With this in  mind, we have the following definition.
\begin{definition}
    Let $A\in PL_d$ for $d>1$. If there exists $k\geq1$ such that $A^{k+1}$ is a row PLM, we say that $A$ is a pre-row Permutation-Like Matrix (pre-row PLM). 
\end{definition}
\begin{remark}
    Notice that every row PLM is a pre-row PLM.
\end{remark}
\begin{remark}
    If $A$ is pre-row and not a row PLM, then $A$ is not periodic. This arises due to Proposition \ref{rowPLMGen}. For example,
    \[A=\begin{pmatrix}
        0&0&0\\
        1&0&0\\
        0&1&1
    \end{pmatrix}\]
    is pre-row since
    \[A^2=\begin{pmatrix}
        0&0&0\\
        0&0&0\\
        1&1&1
    \end{pmatrix}\]
    is a row PLM. Further, $A^m=A^2\cdot A^{m-2}=R_3^{(3)}\cdot A^{m-2}=R_3^{(3)}$ for all $m>2$. In particular, there is no $m$ such that $A^{m+1}=A$.
\end{remark}
\begin{remark}
    If we consider the case $d=3$, note that if $A$ is a CPLM that is not a leading CPLM and has a row PLM $R_m^{(d-1)}$ for its PLC, then $A^2=R_{m+1}^{(d)}$. This extends for $d>3$, as stated in Corollary \ref{GenRowsBigD}, but it is not immediate that these are the only pre-row PLM.
\end{remark}
\subsection{Eigenvalues of PLM}
Next, we would like to consider the eigenvalues of PLM. As a result of Proposition \ref{Period}, we have the following corollary related to eigenvalues.
\begin{corollary}
    Let $A\in PL_d$ be a PLM for $d=2$ or $d=3$. Then every eigenvalues of $A$ is either 0 or a root of unity.
\end{corollary}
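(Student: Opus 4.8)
The plan is to split into the two cases provided by Proposition \ref{Period} and determine the possible eigenvalues in each. Suppose first that $A$ is periodic, so that $A^{k+1}=A$ for some $k\geq 1$. Then $A$ annihilates the polynomial $x^{k+1}-x = x(x^k-1)$, so the minimal polynomial of $A$ divides $x(x^k-1)$ and every eigenvalue $\lambda$ of $A$ must be a root of this polynomial. Its roots are exactly $0$ together with the $k$-th roots of unity, so in the periodic case every eigenvalue is either $0$ or a root of unity.

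In the remaining case, some power of $A$ is a row PLM $R_m^{(d)}$ (for $d=2,3$ one may take this power to be $A^2$, as in the statement of Proposition \ref{Period}). The first step here is to compute the spectrum of a row PLM directly. Since $R_m^{(d)}$ has its $m$-th row equal to $(1,1,\ldots,1)$ and all other entries zero, it is a rank-one matrix, namely the outer product $e_m\mathbf{1}^{\top}$ where $e_m$ is the $m$-th standard basis (column) vector and $\mathbf{1}$ is the all-ones (column) vector. A rank-one matrix $uv^{\top}$ has nonzero eigenvalue $v^{\top}u$ and all other eigenvalues zero; here $\mathbf{1}^{\top}e_m=1$, so the eigenvalues of $R_m^{(d)}$ are $1$ (once) and $0$ (with multiplicity $d-1$). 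Thus every eigenvalue of a row PLM lies in $\{0,1\}$.

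To finish, I would apply the elementary spectral mapping fact that if $\lambda$ is an eigenvalue of $A$ then $\lambda^{2}$ is an eigenvalue of $A^{2}=R_m^{(d)}$. Hence $\lambda^{2}\in\{0,1\}$, which forces $\lambda\in\{0,1,-1\}$, each of which is either $0$ or a root of unity. Combining this with the periodic case yields the corollary. I do not expect any serious obstacle: the argument is routine once Proposition \ref{Period} is available. The only point deserving mild care is that $A$ may be noninvertible, so one cannot reason by passing to $A^{-1}$; instead one works directly with an eigenvector $v$ of $A$ and squares the associated eigenvalue, which sidesteps the issue entirely.
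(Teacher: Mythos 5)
Your proof is correct, and the periodic case is handled exactly as in the paper (an annihilating polynomial $x^{k+1}-x$ forces every eigenvalue to be $0$ or a root of unity). Where you diverge is the pre-row case. The paper never computes the spectrum of a row PLM; instead it observes, via Proposition \ref{rowPLMGen}, that row PLMs are idempotent, so $A^{2m}-A^m=0$ whenever $A^m$ is a row PLM, and then reads the eigenvalues off the annihilating polynomial $x^{2m}-x^m$ just as in the periodic case. You instead diagonalize the row PLM directly -- identifying $R_m^{(d)}$ as the rank-one matrix $e_m\mathbf{1}^{\top}$ with spectrum $\{1,0,\ldots,0\}$ -- and then apply spectral mapping to conclude $\lambda^2\in\{0,1\}$. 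Your route gives sharper information in this case (the eigenvalues lie in $\{0,1,-1\}$ when $A^2$ is a row PLM, not merely among roots of unity), while the paper's route is more uniform, treating both cases by the single device of an annihilating polynomial, and it extends without change to the general pre-row situation where the relevant power of $A$ need not be the square. Both arguments are complete; your closing remark about not needing invertibility is apt, since the spectral mapping step only requires an eigenvector.
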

\begin{proof}
   By Proposition \ref{Period}, we know that every $PL_d$ for $d=2$ or $d=3$ is a pre-row PLM or periodic.
   
   Suppose $A$ is periodic. Then, there exists $m$ such that $A^{m+1}-A=0$. In particular, we know that the eigenvalues of $A$ are roots of the polynomial $y=x^{m+1}-x$. The only possibilities are $m^{th}$ roots of unity and 0.

   Now, suppose $A$ is a pre-row PLM. Then, there exists $m$ such that $A^m$ is a row PLM. Further, for every row PLM $R^{(d)}_n$, it follows by Proposition \ref{rowPLMGen} that $\left(R^{(d)}_n\right)^2=R^{(d)}_n$. In particular, $A^{2m}-A^m=0$, so the eigenvalues of $A$ are roots of the polynomial $y=x^{2m}-x^m$. The only possibilities are $m^{th}$ roots of unity and 0.
\end{proof}

We do not currently have a result concerning the eigenvalues of PLM for $d>3$, except for certain cases. For pre-row PLM, we do have the following result that comes from the definition of pre-row PLM.
\begin{proposition}
    Let $A$ be a $d\times d$ pre-row PLM for $d>1$. If $\lambda$ is an eigenvalue of $A$, then $\lambda=0$ or $\lambda$ is a roots of unity.
\end{proposition}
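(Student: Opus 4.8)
The plan is to follow verbatim the pre-row case of the preceding corollary, observing that that argument never actually used the restriction $d\in\{2,3\}$ and so goes through unchanged for every $d>1$. First I would unpack the hypothesis: by the definition of a pre-row PLM there exists $m=k+1\geq2$ such that $A^m$ is a row PLM, say $A^m=R_n^{(d)}$ for some $1\leq n\leq d$.

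The key structural input is the idempotence of row PLM, which is exactly what Proposition \ref{rowPLMGen} supplies. Since $R_n^{(d)}\in PL_d$, applying that proposition with its matrix $A$ taken to be $R_n^{(d)}$ gives $R_n^{(d)}\cdot R_n^{(d)}=R_n^{(d)}$, i.e. $\left(R_n^{(d)}\right)^2=R_n^{(d)}$. Squaring the relation $A^m=R_n^{(d)}$ then yields
\[A^{2m}=\left(A^m\right)^2=\left(R_n^{(d)}\right)^2=R_n^{(d)}=A^m,\]
so $A$ annihilates the polynomial $p(x)=x^{2m}-x^m=x^m\left(x^m-1\right)$.

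Finally I would pass from the matrix identity to the eigenvalues by the standard fact that $p(A)=0$ forces every eigenvalue of $A$ to be a root of $p$: if $Av=\lambda v$ with $v\neq0$, then $0=p(A)v=p(\lambda)v$ gives $p(\lambda)=0$, whence $\lambda=0$ or $\lambda^m=1$, so $\lambda$ is an $m$-th root of unity. There is no genuine obstacle here — the whole content is the idempotence $\left(R_n^{(d)}\right)^2=R_n^{(d)}$ coming from Proposition \ref{rowPLMGen}, and the only point meriting a word of care is precisely this last step, that a polynomial relation $p(A)=0$ propagates to all eigenvalues of $A$.
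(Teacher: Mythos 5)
Your argument is correct and is essentially identical to the paper's own proof: both extract $A^{2k}=A^k$ from the idempotence of row PLM given by Proposition \ref{rowPLMGen} and conclude that every eigenvalue is a root of $x^{2k}-x^k$. Your added remark justifying why a polynomial identity $p(A)=0$ constrains the eigenvalues is a detail the paper leaves implicit, but it does not change the approach.
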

\begin{proof}
    Since $A$ is a pre-row PLM there exists $k$ such that $A^k$ is a row PLM. Further, for every row PLM $R^{(d)}_m$, it follows by Proposition \ref{rowPLMGen} that $\left(R^{(d)}_m\right)^2=R^{(d)}_m$. In particular, $A^{2k}-A^k=0$, so the eigenvalues of $A$ are roots of the polynomial $y=x^{2k}-x^k$. The only possibilities are $k^{th}$ roots of unity and 0.
\end{proof}
This leads to the following corollary.
\begin{corollary}\label{LDimEigenSpecial}
    Let $A$ be a $d\times d$ CPLM with PLC $A_1$ for $d>1$. Further suppose $A_1$ is a row PLM and $A$ is not a leading PLM. Then, the eigenvalues of $A$ are zero or roots of unity.
\end{corollary}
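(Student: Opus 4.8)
The plan is to reduce the statement to the Proposition immediately preceding it by exhibiting $A$ as a pre-row PLM, in fact one for which $A^2$ is already a row PLM. The full content of the hypothesis is that $A$ is a CPLM with $a_{1,1}=0$ (so $A$ is not a leading CPLM) whose PLC equals some $R_m^{(d-1)}$ with $1\leq m\leq d-1$.

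First I would apply Corollary \ref{GenRowsBigD} with the choice $B=A$. This is legitimate: the corollary requires the left factor to be a CPLM whose PLC is a row PLM $R_m^{(d-1)}$, which is exactly $A$, and the right factor to be a CPLM that is not a leading CPLM; since $A$ itself is not a leading CPLM, taking $B=A$ meets both requirements at once. The conclusion then reads $A\cdot A=R_{m+1}^{(d)}$, so $A^2$ is a row PLM.

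Next I would invoke the definition of pre-row PLM directly: since $A^{1+1}=A^2$ is a row PLM, $A$ is a pre-row PLM with $k=1$. Finally, applying the Proposition just above this corollary, which asserts that every eigenvalue of a pre-row PLM is $0$ or a root of unity, gives the claim at once.

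The argument carries no genuine obstacle; the only point deserving care is the observation that $A$ may legitimately serve as \emph{both} factors in Corollary \ref{GenRowsBigD}, which is precisely what makes the squaring identity $A^2=R_{m+1}^{(d)}$ available. This identity was already flagged in the remark preceding the statement, so the corollary amounts to combining that remark with the spectral result for pre-row PLM.
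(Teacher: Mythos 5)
Your proposal is correct and follows the paper's own route: the paper likewise treats this as immediate from the observation that such an $A$ satisfies $A^2=R_{m+1}^{(d)}$ (Corollary \ref{GenRowsBigD} applied with $B=A$, as flagged in the preceding remark), hence is a pre-row PLM, and then cites the proposition on eigenvalues of pre-row PLM. The only cosmetic point is that for $d=2$ and $d=3$ the squaring identity comes from Corollary \ref{GenRowsDim3} and its remark rather than from Corollary \ref{GenRowsBigD}, which is stated in the $d>3$ section; the argument is otherwise identical.
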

Notice that Corollary \ref{LDimEigenSpecial} follows immediately from previous discussion that CPLM with leading element zero and PLC given by a row PLM are pre-row PLM.

Instead of giving a concrete answer to the eigenvalues of PLM in general, we will give a partial answer by computing the spectral radius of such matrices.

Recall that given complex eigenvalues $\lambda_1,\lambda_2,\ldots,\lambda_d$ for a $d\times d$ matrix $A$, the spectral radius is given by $\rho(A)=max\{|\lambda_1|,|\lambda_2|,\ldots,|\lambda_d|\}$, where $|z|$ is the usual norm on the complex numbers. Further, recall given a matrix norm $||\cdot ||$, we know that $\rho(A)\leq ||A||$. This leads to the following proposition.
\begin{proposition}\label{SpectralProp}
    Let $A\in PL_d(\mathbb{C})$ for all $d\geq1$. Then, $\rho(A)\leq1$.
\end{proposition}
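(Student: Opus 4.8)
The plan is to bound the spectral radius $\rho(A)$ by a matrix norm, using the standard fact recalled just above the statement that $\rho(A) \le \|A\|$ for any matrix norm $\|\cdot\|$. The most natural choice here is the maximum column-sum norm (the induced $1$-norm), since the defining property of a PLM controls exactly the columns: each column contains a unique $1$ and all other entries are $0$.

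First I would recall that the maximum absolute column-sum norm $\|A\|_1 = \max_{1 \le j \le d} \sum_{i=1}^d |a_{i,j}|$ is a genuine submultiplicative matrix norm, so the inequality $\rho(A) \le \|A\|_1$ applies. Then I would compute $\|A\|_1$ directly for $A \in PL_d$. By the definition of a Permutation-Like Matrix, for each fixed column index $j$ there is a unique $i$ with $a_{i,j} = 1$ and every other entry in that column is $0$; hence $\sum_{i=1}^d |a_{i,j}| = 1$ for every $j$. Taking the maximum over $j$ gives $\|A\|_1 = 1$, and therefore $\rho(A) \le \|A\|_1 = 1$, which is the claim.

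The argument is essentially immediate once the right norm is chosen, so there is no serious obstacle — the only point requiring a moment of care is picking the column-sum norm rather than the row-sum norm. Because PLM need not be row stochastic (a row may contain more than one $1$, as the remark on $\begin{pmatrix} 1 & 1 \\ 0 & 0 \end{pmatrix}$ illustrates), the maximum row-sum norm $\|A\|_\infty$ can exceed $1$ and would fail to give the bound. Working with columns is exactly what the PLM hypothesis is tailored to, so the column-sum computation closes the proof cleanly. I would remark that this bound is sharp, since every permutation matrix is a PLM with eigenvalues on the unit circle, so $\rho(A) = 1$ is attained.
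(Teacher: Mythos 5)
Your proposal is correct and follows exactly the paper's own argument: compute the induced $1$-norm (maximum column sum), observe it equals $1$ because each column of a PLM contains a single $1$ and zeros elsewhere, and invoke $\rho(A)\leq\|A\|_1$. The additional remarks on why the row-sum norm would fail and on sharpness are accurate but not needed.
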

\begin{proof}
    This easily follows using the 1-norm. Recall that the 1-norm of $A$ to be given by
    \[||A||_1=max_{1\leq j\leq d}\sum_{i=1}^da_{i,j}.\]
    However, since $A$ is a PLM, for each $1\leq j\leq d$, there exists a unique $1\leq m\leq d$ such that $a_{m,j}=1$, with all other entries in that column zero. Thus, $||A||_1=1$ and hence $\rho(A)\leq 1$.
\end{proof}

As a last note on eigenvalues, we have the following proposition and the related corollary.
\begin{proposition}\label{NonPermZero}
    Let $A\in PL_d$ for $d>1$. If $A$ is not a Permutation Matrix, then $0$ is an eigenvalue for $A$.
\end{proposition}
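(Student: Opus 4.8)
The plan is to show that any PLM which is not a permutation matrix must be singular, so that $\det(A)=0$ and hence $0$ is an eigenvalue. I would establish singularity through a counting argument on the distribution of the $1$s.

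First I would recall the defining property: since $A\in PL_d$, each of the $d$ columns contains exactly one $1$, so $A$ contains exactly $d$ ones in total, distributed among the $d$ rows. If every row contained exactly one $1$ as well, then $A$ would have a single $1$ in each row and each column, i.e. $A$ would be a permutation matrix. Since $A$ is assumed not to be a permutation matrix, the ones cannot be distributed one per row. I would then make the pigeonhole step precise: there are $d$ ones placed into $d$ rows, and the placement is not the uniform one-per-row assignment, so some row must contain at least two ones, and by conservation of the total count some other row must contain none. In other words, $A$ has an all-zero row. A matrix with an all-zero row has determinant zero, so $A$ is singular and $0$ is an eigenvalue of $A$.

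I expect no serious obstacle here; the only point requiring care is the counting argument that ``not a permutation matrix'' forces an all-zero row, which is a one-line pigeonhole observation once the total count of $d$ ones is noted. An equivalent and equally short route, which I might prefer for cleanliness, works directly with the columns: each column of $A$ is a standard basis vector $e_{i_j}$, where $i_j$ is the row of the unique $1$ in column $j$, and $A$ is a permutation matrix precisely when the map $j\mapsto i_j$ is a bijection on $\{1,\dots,d\}$. If $A$ is not a permutation matrix, this map on a finite set of size $d$ fails to be surjective and hence fails to be injective, so two columns coincide; the columns are then linearly dependent and again $\det(A)=0$, giving $0$ as an eigenvalue.

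Either formulation completes the proof in a few lines, and I would likely present the column version as the main argument since it connects cleanly with the earlier observation (in the discussion around Proposition \ref{SpectralProp}) that the columns of a PLM are elementary basis vectors.
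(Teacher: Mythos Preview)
Your proposal is correct and matches the paper's proof almost exactly: the paper defines the map $j\mapsto c_j$ recording the row of the unique $1$ in column $j$ (your $j\mapsto i_j$), observes that non-permutation forces a repeated value and hence a missing value $n_0$, and concludes that row $n_0$ is zero so $\det(A)=0$. Your two formulations (zero row via pigeonhole, repeated column via non-injectivity) are precisely the two halves of the paper's argument.
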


\begin{proof}
    Suppose that $A$ is not a Permutation Matrix. Notice that since $A$ is PLM, it has exactly $d$ nonzero entries, which are all equal to 1 and in different columns. Let the vector $c\in\mathbb{R}^d$ be the vector whose $n^{th}$ component, denoted $c_n$, corresponds to the row of $A$ such that $a_{c_n,n}=1$. Since $A$ is not a Permutation Matrix, we know that not every row sums to 1. In particular, there exists $1\leq m\neq n\leq d$ such that $c_m=c_n$. However, there are $d$ rows and $d$ columns, which implies there exists some $1\leq n_0\leq d$ such that $n_0$ does not appear as a component in the vector $c$. Thus, in the matrix $A$, the row $n_0$ consists of all zeros and hence the determinant is zero. Therefore, $0$ is an eigenvalue of $A$.
\end{proof}
This immediately yields the following corollary. Note that the identity matrix is considered to be a permutation matrix.
\begin{corollary}\label{InvertCor}
    The only invertible elements of $PL_d$ for $d>1$ are the Permutation Matrices.
\end{corollary}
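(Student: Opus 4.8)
The plan is to read off this corollary directly from Proposition \ref{NonPermZero}, handling the two halves of the ``only'' claim in turn. First I would confirm that every Permutation Matrix really is an invertible element of $PL_d$. Recall from the earlier remark that every Permutation Matrix is a PLM, so Permutation Matrices do lie in the monoid $PL_d$. For invertibility, a Permutation Matrix $P$ has rows equal to distinct standard basis vectors, so $PP^{T}=I$ and hence $P^{-1}=P^{T}$; thus every Permutation Matrix is an invertible element of $PL_d$.

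For the reverse direction I would argue by contraposition. Suppose $A\in PL_d$ is invertible; then $\det(A)\neq 0$, so $0$ is not an eigenvalue of $A$. By Proposition \ref{NonPermZero}, any element of $PL_d$ (with $d>1$) that is \emph{not} a Permutation Matrix has $0$ as an eigenvalue, and hence vanishing determinant. Taking the contrapositive, an element of $PL_d$ whose determinant is nonzero must be a Permutation Matrix. Therefore the invertible $A$ is a Permutation Matrix, and the two inclusions together show that the invertible elements of $PL_d$ are exactly the Permutation Matrices.

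I do not anticipate any genuine obstacle here, since the substantive content is carried entirely by Proposition \ref{NonPermZero}, which I am free to assume. The only points requiring a moment's care are purely bookkeeping: verifying that Permutation Matrices are themselves PLMs (so that the asserted invertible elements actually lie in $PL_d$) and that they are invertible, both of which are standard. The restriction $d>1$ is inherited from Proposition \ref{NonPermZero} and causes no difficulty.
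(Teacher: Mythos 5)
Your proposal is correct and follows the paper exactly: the paper derives this corollary immediately from Proposition \ref{NonPermZero} (a non-Permutation PLM has $0$ as an eigenvalue, hence vanishing determinant), together with the standard fact that Permutation Matrices are invertible PLMs. Your added bookkeeping about $PP^{T}=I$ is fine but not a departure from the paper's route.
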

\begin{remark}
    Corollary \ref{InvertCor} implies that the only elements of the Stochastic Group that are in $PL_d$ are the Permutation Matrices.
\end{remark}

\subsection{Left Stochastic Matrices}

Lastly, we would like to visit the idea of Stochastic Matrices and how it relates to PLM. Recall that a matrix $A$ with non-negative entries $a_{i,j}$ is Left (Column) Stochastic if for each $1\leq j\leq d$, it follows that $\displaystyle\sum_{i=1}^d a_{i,j}=1$. Similarly, $A$ is Right (Row) Stochastic if for each $1\leq i\leq d$, it follows that $\displaystyle\sum_{j=1}^d a_{i,j}=1$. Lastly, we say that a matrix is Doubly Stochastic if it is both Left and Right Stochastic (\cite{Linear}).

There is a classical result, called Birkhoff-Von Neumann's Theorem (\cite{birk}), which says that a matrix $A$ is Doubly Stochastic if and only if there exists $m\geq1$ and $\lambda_i\in[0,1]$ for $1\leq i\leq m$ such that $\displaystyle\sum_{i=1}^m\lambda_i=1$ and
$A=\displaystyle\sum_{i=1}^m\lambda_iP_i$, where $P_i$ are Permutation Matrices. Recall that the condition that there exists $m\geq1$ and $\lambda_i\in[0,1]$ for $1\leq i\leq m$ such that $\displaystyle\sum_{i=1}^m\lambda_i=1$ and
$A=\displaystyle\sum_{i=1}^m\lambda_iP_i$, where $P_i$ are Permutation Matrices means that every Doubly Stochastic Matrix is a convex combination of Permutation Matrices. We would like to state a similar result for PLM regarding Left Stochastic Matrices. First, notice the following comes immediately from the definition of left stochastic with the definition of PLM.
\begin{lemma}
    Every Permutation-Like Matrix is a Left Stochastic Matrix.
\end{lemma}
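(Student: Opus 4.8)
The plan is to prove this lemma by directly unwinding the two defining conditions of a Left Stochastic Matrix and checking each against the definition of a Permutation-Like Matrix. Recall from the definition that $A$ is Left Stochastic if its entries $a_{i,j}$ are non-negative and, for every column index $1\leq j\leq d$, the column sum satisfies $\sum_{i=1}^d a_{i,j}=1$. So the proof amounts to verifying both the non-negativity condition and the column-sum condition for an arbitrary $A\in PL_d$.

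First I would dispatch the non-negativity condition. By the definition of a PLM, every entry $a_{i,j}$ equals either $0$ or $1$, and both of these values are non-negative; this handles the first requirement immediately with no computation.

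Next I would verify the column-sum condition, which is the heart of the argument. Fix an arbitrary column index $1\leq j\leq d$. The defining property of a PLM guarantees that there exists a \emph{unique} row index $1\leq i_0\leq d$ with $a_{i_0,j}=1$, and consequently $a_{i,j}=0$ for every $i\neq i_0$. Summing the entries of column $j$ therefore gives
\[
\sum_{i=1}^d a_{i,j}=a_{i_0,j}+\sum_{i\neq i_0}a_{i,j}=1+0=1.
\]
Since $j$ was arbitrary, every column of $A$ sums to $1$, which is exactly the column-sum condition in the definition of Left Stochastic.

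There is no genuine obstacle in this argument; it is a direct consequence of the definitions, and indeed the excerpt itself remarks earlier that ``every PLM is a Left Stochastic Matrix.'' The only point requiring any care is recognizing that the ``unique $1$ in each column'' clause of the PLM definition forces all other column entries to vanish, so that the single nonzero entry contributes exactly $1$ to the sum; once this is observed, combining it with the non-negativity of the entries $0$ and $1$ completes the verification and establishes that $A$ is Left Stochastic.
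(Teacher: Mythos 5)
Your proof is correct and is exactly the direct definition-unwinding argument the paper has in mind; the paper simply asserts the lemma "comes immediately from the definition of left stochastic with the definition of PLM" without writing out the details you supply. No issues.
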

Next, we would like to prove a similar result as Birkhoff-Von Neumann's Theorem, but for Left Stochastic Matrices.
\begin{lemma}\label{ConvexStoch}
    Let $\lambda_i\in [0,1]$ for $1\leq i\leq k$ such that $\displaystyle\sum_{i=1}^k\lambda_i=1$ and let $A_i$ for $1\leq i\leq k$ be Left Stochastic Matrices. Then, $\lambda_1A_1+\lambda_2A_2+\ldots+\lambda_kA_k$ is a Left Stochastic Matrix.
\end{lemma}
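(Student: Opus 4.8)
The plan is to verify directly the two defining conditions of a Left Stochastic Matrix for the convex combination $B=\sum_{\ell=1}^k\lambda_\ell A_\ell$: namely, that every entry of $B$ is non-negative, and that each column of $B$ sums to $1$. Since both conditions are linear in the entries, the argument should reduce to an interchange of finite summations together with the hypothesis $\sum_{\ell=1}^k\lambda_\ell=1$. To keep the indices clear I would write $a^{(\ell)}_{p,j}$ for the entry of $A_\ell$ in row $p$ and column $j$, and $b_{p,j}=\sum_{\ell=1}^k\lambda_\ell\, a^{(\ell)}_{p,j}$ for the corresponding entry of $B$.

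First I would dispatch non-negativity. Each $\lambda_\ell$ lies in $[0,1]$, hence $\lambda_\ell\geq 0$, and each $A_\ell$ is Left Stochastic, so $a^{(\ell)}_{p,j}\geq 0$ for all $p,j$. Thus every $b_{p,j}$ is a sum of products of non-negative reals, and so $b_{p,j}\geq 0$. This establishes that $B$ has non-negative entries.

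Next I would check the column-sum condition, which is the substantive computation. Fixing a column $1\leq j\leq d$ and summing the entries of $B$ down that column,
\[
\sum_{p=1}^d b_{p,j}=\sum_{p=1}^d\sum_{\ell=1}^k\lambda_\ell\, a^{(\ell)}_{p,j}
=\sum_{\ell=1}^k\lambda_\ell\sum_{p=1}^d a^{(\ell)}_{p,j},
\]
where the second equality is just swapping the order of the two finite sums. Since each $A_\ell$ is Left Stochastic, the inner sum $\sum_{p=1}^d a^{(\ell)}_{p,j}$ equals $1$ for every $\ell$, so the expression collapses to $\sum_{\ell=1}^k\lambda_\ell$, which equals $1$ by hypothesis. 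Hence every column of $B$ sums to $1$, and combined with non-negativity this shows $B$ is Left Stochastic.

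I do not anticipate a genuine obstacle here: the result is a routine consequence of the linearity of the column-sum functional and of convexity, and the only step requiring any care is keeping the matrix-index $\ell$ notationally distinct from the row/column indices $p,j$ when exchanging the order of summation. The same computation would show, mutatis mutandis, that convex combinations of Right Stochastic (and hence Doubly Stochastic) matrices remain stochastic, but for the present lemma only the Left Stochastic case is needed.
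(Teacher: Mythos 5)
Your proof is correct and follows essentially the same direct computation as the paper's: both verify that each column of the convex combination sums to $\sum_{\ell}\lambda_\ell=1$ by linearity of the column-sum. You are slightly more thorough in explicitly checking non-negativity of the entries, which the paper leaves implicit.
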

\begin{proof}
    For every $1\leq i\leq k$, notice that the sum of every column of $A_i$ is 1 by definition of left stochastic, so the sum of every column of $\lambda_iA_i$ is $\lambda_i$. Further, since matrix addition is pointwise, we have that the sum of every column of $\lambda_1A_1+\lambda_2A_2+\ldots+\lambda_kA_k$ is $\lambda_1+\lambda_2+\ldots+\lambda_k=1$. Therefore, $\lambda_1A_1+\lambda_2A_2+\ldots+\lambda_kA_k$ is a Left Stochastic Matrix.
\end{proof}
Notice that Lemma \ref{ConvexStoch} implies that the convex combination of PLM is Left Stochastic. We will utilize this in the following theorem.
\begin{theorem}\label{LeftBvN}
    Let $A$ be a $d\times d$ matrix. Then, $A$ is left stochastic if and only if $A$ is a convex combination of Permutation-Like Matrices.
\end{theorem}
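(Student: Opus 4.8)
**The plan is to prove the two directions separately, with the forward direction being routine and the reverse direction requiring the real work.**

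One direction is already essentially done: if $A$ is a convex combination of Permutation-Like Matrices, then since every PLM is Left Stochastic (by the lemma preceding the theorem) and convex combinations of Left Stochastic Matrices are Left Stochastic (by Lemma \ref{ConvexStoch}), it follows immediately that $A$ is Left Stochastic. So the content of the theorem is the converse.

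For the converse, suppose $A$ is Left Stochastic. The plan is to express $A$ as a convex combination of PLM by working column-by-column and then assembling. The key structural observation is that PLM have complete freedom in choosing, independently for each column, which single row receives the $1$; there is no global compatibility constraint across columns (unlike Permutation Matrices, where the chosen rows must form a permutation). This decoupling is exactly what makes Left Stochastic matrices easier to decompose than Doubly Stochastic ones. Concretely, I would note that the set of $d\times d$ PLM is in bijection with functions $f\colon\{1,\dots,d\}\to\{1,\dots,d\}$ (assigning to each column its nonzero row), so a PLM is just a tuple of standard basis vectors $(e_{f(1)},\dots,e_{f(d)})$ placed as columns. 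The plan is then to decompose each column of $A$ independently as a convex combination of standard basis vectors and stitch these together.

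The cleanest way to carry this out is to use that the standard basis vectors $e_1,\dots,e_d$ are exactly the extreme points of the probability simplex $\Delta^{d-1}=\{x\in\mathbb{R}^d_{\geq0}:\sum_i x_i=1\}$. Each column $A^{(j)}$ of $A$ lies in $\Delta^{d-1}$ precisely because $A$ is Left Stochastic, so I can write
\[
A^{(j)}=\sum_{i=1}^d a_{i,j}\,e_i,
\]
a convex combination of basis vectors with the column entries as weights. The main step is then to convert these $d$ independent per-column convex decompositions into a single convex decomposition of the whole matrix into PLM. The natural device is to take a common refinement: for each function $f\colon\{1,\dots,d\}\to\{1,\dots,d\}$, let $P_f$ be the PLM with columns $e_{f(1)},\dots,e_{f(d)}$ and assign it the product weight $\lambda_f=\prod_{j=1}^d a_{f(j),j}$. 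These weights are nonnegative, and they sum to $1$ since
\[
\sum_{f}\lambda_f=\prod_{j=1}^d\Bigl(\sum_{i=1}^d a_{i,j}\Bigr)=\prod_{j=1}^d 1=1,
\]
and one checks that the $(i,j)$ entry of $\sum_f \lambda_f P_f$ collapses, upon summing out all coordinates $f(j')$ for $j'\neq j$, to exactly $a_{i,j}$.

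The step I expect to be the main obstacle is verifying that last entrywise identity cleanly, i.e. that the product-measure weighting reproduces $A$ rather than some distortion of it; this is where the independence of columns must be used carefully, and it is worth writing out the factorization of the sum over $f$ into a sum over $f(j)$ times the product of the remaining column-sums. An alternative, possibly lighter, assembly avoids the exponential family $\{P_f\}$ altogether: one can instead peel off PLM greedily, at each stage choosing a PLM $P$ supported on currently-nonzero entries of the residual matrix and subtracting the largest feasible multiple, reducing the number of nonzero entries at each step and terminating in finitely many steps (a Birkhoff-style argument). Either route works; the product-measure construction is conceptually immediate given the column independence, so I would present that as the main proof and remark that the greedy algorithm gives a decomposition with fewer terms.
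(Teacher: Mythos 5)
Your proposal is correct, but your main argument takes a genuinely different route from the paper. The paper proves the forward direction by a greedy peeling algorithm of Birkhoff type: it defines $P(B)$ to be the PLM selecting, in each column of $B$, the row of the first strictly positive entry, subtracts $\lambda_{n+1}P(A_n)$ where $\lambda_{n+1}$ is the minimum of those selected entries over all columns, and argues that each step creates at least one new zero while keeping all column sums equal, so the process terminates with $A=\sum_{i=1}^N\lambda_iP_i$ and $\sum_i\lambda_i=1$. This is exactly the ``alternative, lighter assembly'' you mention in passing; the paper makes it the whole proof. Your primary construction is instead the product-measure decomposition: indexing PLM by functions $f\colon\{1,\dots,d\}\to\{1,\dots,d\}$ and setting $\lambda_f=\prod_{j=1}^d a_{f(j),j}$, which is valid --- the weights are nonnegative, sum to $\prod_j\bigl(\sum_i a_{i,j}\bigr)=1$, and the $(i,j)$ entry of $\sum_f\lambda_fP_f$ factors as $a_{i,j}\prod_{j'\neq j}\bigl(\sum_{i'}a_{i',j'}\bigr)=a_{i,j}$, the verification you correctly flag as the one step needing care. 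The trade-off: your construction is a one-line closed formula that makes the column-decoupling (the real reason Left Stochastic is easier than Doubly Stochastic) completely transparent, but it uses up to $d^d$ terms; the paper's greedy method is algorithmic, produces at most on the order of $d^2$ terms, and requires the bookkeeping about simultaneous termination of all columns that your formula sidesteps entirely. Both directions of the equivalence are handled identically to the paper (via Lemma \ref{ConvexStoch}), so there is no gap --- only a different, and arguably cleaner, existence proof.
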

\begin{proof}
    The reverse implication follows immediately from Lemma \ref{ConvexStoch}. Thus, we will focus on the forward implication. 

    Given a $d\times d$ matrix $B$ and $1\leq j\leq d$, let $r(B)\in k^d$ be the vector that gives the row of the first strictly positive entry in each column of $B$. Let $r(B)_j$ be the $j^{th}$ component of $r(B)$, i.e. the row of the first strictly positive entry in column $j$. Lastly, let $P(B)\in PL_d$ be the PLM such that for each $1\leq j\leq d$, $P(B)$ has a 1 in $r(B)_j$ row and $j^{th}$ column, with zeros elsewhere. For example, if we take $B$ to be the Left Stochastic Matrix
    \[B=\begin{pmatrix}
        0.1&0&0.2\\
        0.9&0.5&0.8\\
        0&0.5&0
    \end{pmatrix}\]
    then, $P(B)$ is given by
    \[P(B)=\begin{pmatrix}
        1&0&1\\
        0&1&0\\
        0&0&0
    \end{pmatrix}.\]
    
    Let $A$ be a $d\times d$ Left Stochastic Matrix with entries $a_{i,j}$. In particular, we know that for all $1\leq j\leq d$ that $\displaystyle\sum_{i=1}^da_{i,j}=1$. If $A\in PL_d$, we are done. Suppose that $A$ is not a PLM.

    First, let $\lambda_{1}=min_{1\leq j\leq d}\{a_{r(A)_j,j}|1\leq j\leq d\}$. Notice that $\lambda_1<1$ since the columns are non-negative, sum to 1, and $A$ is not a PLM so there exists at least two strictly positive terms in some column. Then, $A_1=A-\lambda_1P(A)$ is a $d\times d$ matrix with at least 1 more zero than $A$. Notice that the sum of the columns of $A_1$ are all $1-\lambda_1$. 

    Next, suppose that for some $n\geq1$, we have $A_n=A-\sum_{i=1}^n\lambda_iP_i$ for PLM $P_i$. Take $\lambda_{n+1}=min_{1\leq j\leq d}\{a_{r(A_{n})_j,j}|1\leq j\leq d\}$. Notice that the columns of $A_n$ sum to $1-\sum_{i=1}^n\lambda_i$, so $\lambda_{n+1}\leq 1-\sum_{i=1}^n\lambda_i$. Now, if we take $A_{n+1}=A_n-\lambda_{n+1}P(A_n)$, we have at least 1 more zero in $A_{n+1}$ than $A_n$. Further, the columns of $A_{n+1}$ sum to $1-\sum_{i=1}^{n+1}\lambda_i,$ strictly less than the sum of each column of $A_n$. 
    
    Either $A_{n+1}$ is the zero matrix or we can iterate this process. Since there are finitely many positions, this process terminates. Further, we are reducing each column by $0<\lambda_{n+1}<1$ each time, so this process will terminate for all columns at the same step. Lastly, because we are taking $\lambda_{n+1}$ to be a minimum across the first strictly positive entries in each column (ordering the rows using the usual ordering on the integers), we never end up with any negative entries. Therefore, there exists some $N\geq1$ such that
    \[A-\sum_{i=1}^N\lambda_iP_i=0\]
    where $P_i\in PL_d$ and $0<\lambda_i<1$ for all $1\leq i\leq N$. Notice that $\sum_{i=1}^N\lambda_i=1$, since following the last step the columns sum to $0=1-\sum_{i=1}^N\lambda_i$. Thus, by Lemma \ref{ConvexStoch}, we know that $\sum_{i=1}^N\lambda_iP_i$ is a Left Stochastic Matrix and
    \[A=\sum_{i=1}^N\lambda_iP_i.\]
\end{proof}
\begin{remark}
    Stated another way, this theorem says that we know that the polytope in $k^{d^2}$ formed by Left Stochastic Matrices has $PL_d$ as its vertices. A similar proof could be used to show that the polytope in $k^{d^2}$ by Right Stochastic Matrices has its vertex set as the set of matrices whose transpose is a PLM. We will not go in this direction in this paper because we are focused explicitly on PLM. 
\end{remark}
\section{Remarks on Permutation-Like Matrices}
We would like to end on a handful of remarks and open questions concerning PLM.

First, notice that the motivation for studying PLM was to determine a general pattern for leg identifying multiplication on edge $d$-partitions of $K_{2d}$. However, as seen in the case $d>3$, the descriptions of the multiplication of PLM although explicit are not as straightforward to use in a general rule. For that reason, we have the following open problem. 
\begin{problem}
    Obtain an explicit description of the multiplication of edge $d$-partitions of $K_{2d}$ using the results of this paper.
\end{problem}

Also, recall that the eigenvalues of the Permutation Matrices are all roots of unity. It is interesting that the eigenvalues of PLM for $d=2$ and $d=3$ are roots of unity and 0 (for PLM that are not Permutation Matrices or the identity). This fits into the notion that PLM generalize the Permutation Matrices, as the eigenvalues of the Permutation Matrices are roots of unity. Therefore, we have the following open conjecture.
\begin{conjecture}
    Let $A\in PL_d$ be a PLM for $d>1$. Then, the eigenvalues of $A$ are zeros or roots of unity. Further, zero is an eigenvalue if and only if $A$ is not a Permutation Matrix or the identity matrix.
\end{conjecture}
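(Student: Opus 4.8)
The plan is to recognize $PL_d$ as a familiar finite monoid and to replace the (non-generalizable) dichotomy of Proposition \ref{Period} by the purely semigroup-theoretic fact of eventual periodicity. To each $A\in PL_d$ I would associate the self-map $f_A:\{1,\dots,d\}\to\{1,\dots,d\}$ sending a column index $j$ to the unique row $i$ with $a_{i,j}=1$; conversely each such function determines a PLM, so $A\mapsto f_A$ is a bijection between $PL_d$ and the full transformation monoid $T_d$ of all self-maps of a $d$-element set. The computation in the proof of Proposition \ref{PLAlg} shows $(A\cdot B)_{i,j}=a_{i,f_B(j)}=[\,f_A(f_B(j))=i\,]$, so this bijection is a monoid isomorphism: $A\cdot B$ corresponds to $f_A\circ f_B$, and hence $A^{n}$ corresponds to $f_A^{\,n}$. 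This reframes the entire spectral question in terms of a single arbitrary function and its iterates.

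The key structural input is that $T_d$ is finite, so the cyclic subsemigroup generated by any $f=f_A$ is eventually periodic: there are integers $k\geq 1$ and $p\geq 1$ (the length of the longest tail, and the least common multiple of the cycle lengths, of the functional graph of $f$) with $f^{\,k+p}=f^{\,k}$. Transporting this back through the isomorphism yields the single matrix identity $A^{k+p}=A^{k}$, valid for every $A\in PL_d$ and every $d$. I would emphasize that this is strictly weaker than the $d=2,3$ alternative of Proposition \ref{Period}: a function whose graph has a tail of length $\geq 2$ feeding a cycle of length $\geq 2$ (which first occurs at $d=4$) is neither periodic nor pre-row, which is exactly why that trichotomy cannot be pushed past $d=3$, yet eventual periodicity still holds.

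The spectral conclusion is then immediate linear algebra. Since $A^{k+p}=A^{k}$, the matrix $A$ is annihilated by $x^{k}(x^{p}-1)$, so its minimal polynomial divides $x^{k}(x^{p}-1)$ and every eigenvalue of $A$ is a root of this polynomial, i.e. either $0$ or a $p$-th root of unity. This establishes the first assertion for all $d>1$. For the second assertion, the forward direction is exactly Proposition \ref{NonPermZero}: if $A$ is not a permutation matrix, then $0$ is an eigenvalue. For the converse, a permutation matrix (the identity included, being itself a permutation matrix) is invertible with nonzero determinant, so $0$ is not one of its eigenvalues, as also recorded in Corollary \ref{InvertCor}. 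Hence $0$ is an eigenvalue precisely when $A$ fails to be a permutation matrix, which is the stated biconditional.

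The main obstacle here is conceptual rather than computational: the result is genuinely beyond the reach of the $d=2,3$ argument because the clean ``periodic or pre-row'' description breaks down, and the correct replacement is to view a PLM as an arbitrary self-map and invoke eventual periodicity of its functional graph. Once this identification is made, no case analysis on $d$ is required, and both assertions of the conjecture follow from results already proved in the paper together with one standard fact about finite transformation monoids.
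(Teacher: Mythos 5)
Your proposal is correct, and it does not merely take a different route from the paper --- the paper offers no proof of this statement at all: it is listed as an open conjecture, with only the $d\le 3$ case (via the dichotomy of Proposition \ref{Period}) and the pre-row case established. The key move you make, identifying $A\in PL_d$ with the self-map $f_A$ of $\{1,\dots,d\}$ sending each column to the row of its unique $1$, is exactly right: the computation $c_{i,j}=a_{i,p_j}$ from the proof of Proposition \ref{PLAlg} shows $f_{A\cdot B}=f_A\circ f_B$, so $PL_d$ is isomorphic as a monoid to the full transformation monoid $T_d$ (note $|PL_3|=27=3^3$, consistent with the paper's count). Eventual periodicity of iterates of a self-map of a finite set then gives $A^{k+p}=A^k$ for every $A$, so the minimal polynomial divides $x^k(x^p-1)$ and every eigenvalue is $0$ or a root of unity; combined with Proposition \ref{NonPermZero} and the invertibility of permutation matrices (Corollary \ref{InvertCor}), both halves of the conjecture follow for all $d>1$. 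Your diagnosis of why the paper's trichotomy cannot be pushed past $d=3$ is also accurate: a functional graph with a tail of length $\ge 2$ feeding a cycle of length $\ge 2$ (first possible at $d=4$, e.g.\ $1\mapsto 2\mapsto 3\leftrightarrow 4$) is neither periodic nor pre-row, yet is still eventually periodic. What the paper's case-by-case approach buys is explicit multiplication formulas and the finer periodic/pre-row classification for small $d$; what your approach buys is the full conjecture, with the period $p$ identified concretely as the lcm of the cycle lengths of $f_A$ and the nilpotency index $k$ as the longest tail length. This incidentally also answers the paper's open problem on realizing $PL_d$ as a monoid generalizing the symmetric group: it is $T_d$. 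The only caveat is presentational, not mathematical --- you are proving a statement the paper explicitly leaves open, so this should be framed as a theorem with proof rather than a verification.
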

Notice that the second half of this conjecture has been answered, whereas the former part has only been answered for $d\leq3$.

Third, a constructive formulation was obtained for Theorem \ref{LeftBvN} and a similar formulation can be obtained for the Right Stochastic version (as previously remarked). One open question follows since the polytope (Birkoff Polytope) formed by Doubly Stochastic Matrices is the intersection of the polytopes formed by Left and Right Stochastic Matrices. 
\begin{problem}
    Given a doubly stochastic matrix $A$, determine if its convex combination of PLM constructed in Theorem \ref{LeftBvN} and convex combination of transposes of PLM constructed in a similar way can be used to write $A$ as a convex combination of Permutation Matrices (as stipulated in Birkhoff-Von Neumann's Theorem).
\end{problem}

Fourth, recall the Stochastic Group (introduced in \cite{StochGrp}) is defined as the group of $d\times d$ invertible Left Stochastic Matrices. We know from Theorem \ref{LeftBvN} that every Left Stochastic Matrix can be written as a convex combination of PLM. This leads to the following problem.
\begin{problem}
    Obtain a computational formula for writing elements of the Stochastic Group as convex combinations of PLM.
\end{problem}

Fifth, $A$ is a $d\times d$ pre-row PLM for $d=2$ or $d=3$ if and only if $A$ is a CPLM with leading element zero and PLC $R_m^{(d-1)}$ for some $1\leq m\leq d-1$. The reverse implication of this statement holds for $d>3$, but the forward implication is still an open problem.
\begin{problem}
    For all $d>1$, show that every $d\times d$ pre-row PLM is a CPLM with leading element zero and PLC $R_m^{(d-1)}$ for some $1\leq m\leq d-1$.
\end{problem}

Lastly, one nice property of Permutation Matrices is the identification with the symmetric group. Further, we know PLM are closed under multiplication, have an identity, but are not necessarily invertible (thus has a monoid structure). This leads to the following problem.
\begin{problem}
    Determine if there exists a monoid which generalizes the symmetric group and is isomorphic to $PL_d$.
\end{problem}

\section*{Acknowledgment}
We thank Jeremy Case for discussion regarding this paper as well as his comments towards the presentation of the paper.

\bibliographystyle{amsalpha}

\begin{thebibliography}{A}

\bibitem{Ayres} F. Ayres Jr., \textit{Theory and Problems of Matrices}. McGraw-Hill, (1962).

\bibitem{birk} G. Birkhoff, \textit{Tres observaciones sobre el algebra lineal}. Univ. Nac. Tucumán Rev, Ser. A, \textbf{5}, 147–-151. (1946).

\bibitem{Linear} R. A. Horn and C. R. Johnson, \textit{Matrix analysis}. Cambridge university press, (2012).

\bibitem{S2Up} S. R. Lippold, \textit{The Algebra of $S^2$ Upper Triangular Matrices}. arXiv:2310.00494 (2023).

\bibitem{StochGrp} D. G. Poole, \textit{The Stochastic Group}. Am. Math. Mon., \textbf{102} (1995), no. 9, 798--801.











\end{thebibliography}

\end{document}